\theoremstyle{plain}
\newtheorem{thm}{Theorem}[section]
\newtheorem{lemma}[thm]{Lemma}
\newtheorem{prop}[thm]{Proposition}
\newtheorem{cor}[thm]{Corollary}
\theoremstyle{definition}
\newtheorem{defn}[thm]{Definition}
\newtheorem{Example}[thm]{Example}
\theoremstyle{remark}
\newtheorem{remark}[thm]{Remark}
\numberwithin{equation}{section}
\def\Hom{\mathrm{Hom}}
\def\makeop#1{\expandafter\def\csname#1\endcsname
  {\mathop{\rm #1}\nolimits}\ignorespaces}
\def\makebb#1{\expandafter\def
  \csname bb#1\endcsname{{\mathbb{#1}}}\ignorespaces}
\def\makebf#1{\expandafter\def\csname bf#1\endcsname{{\bf
      #1}}\ignorespaces} 
\def\makegr#1{\expandafter\def
  \csname gr#1\endcsname{{\mathfrak{#1}}}\ignorespaces}
\def\makescr#1{\expandafter\def
  \csname scr#1\endcsname{{\EuScript{#1}}}\ignorespaces}
\def\makecal#1{\expandafter\def\csname cal#1\endcsname{{\mathcal
      #1}}\ignorespaces} 
\def\doLetters#1{#1A #1B #1C #1D #1E #1F #1G #1H #1I #1J #1K #1L #1M
                 #1N #1O #1P #1Q #1R #1S #1T #1U #1V #1W #1X #1Y #1Z}
\def\doletters#1{#1a #1b #1c #1d #1e #1f #1g #1h #1i #1j #1k #1l #1m
                 #1n #1o #1p #1q #1r #1s #1t #1u #1v #1w #1x #1y #1z}
     \def\qed{\qedmark\medbreak}%
\def\qedmark{{\enspace\vrule height 6pt width 5pt depth 1.5pt}}%
    \def\setminus{\smallsetminus}
\def\Zp{{\bbZ}_p}
\def\Qbar{\overline{\bbQ}}
\def\wh{\widehat}
\def\Ind{{\rm Ind}}
\def\G{\mathbb{G}}
\def\Q{\mathbb{Q}}
\def\Z{\mathbb{Z}}
\def\A{\mathbb{A}}
\def\C{\mathbb{C}}
\def\char{\text{char }}
\def\embed{\hookrightarrow}
\def\F{\bbF}
\def\ol{\overline}
\newcommand{\<}{\langle}   
\renewcommand{\>}{\rangle} 
\newcommand{\isoto}{\stackrel{\sim}{\to}}
\newcommand{\xdashrightarrow}[2][]
  {\ext@arrow 0359\rightarrowfill@@{#1}{#2}}
\newcommand{\xdashleftarrow}[2][]
  {\ext@arrow 3095\leftarrowfill@@{#1}{#2}}
\newcommand{\xdashleftrightarrow}[2][]{\ext@arrow 3359\leftrightarrowfill@@{#1}{#2}}
\def\rightarrowfill@@{\arrowfill@@\relax\relbar\rightarrow}
\def\leftarrowfill@@{\arrowfill@@\leftarrow\relbar\relax}
\def\leftrightarrowfill@@{\arrowfill@@\leftarrow\relbar\rightarrow}
\def\arrowfill@@#1#2#3#4{%
  $\m@th\thickmuskip0mu\medmuskip\thickmuskip\thinmuskip\thickmuskip
   \relax#4#1
   \xleaders\hbox{$#4#2$}\hfill
   #3$%
}
\DeclareSymbolFont{cyrletters}{OT2}{wncyr}{m}{n}
\DeclareMathSymbol{\Sha}{\mathalpha}{cyrletters}{"58}
\def\Gmk{\G_{{\rm m}, k}}
\begin{document}

\title[]{Computing Tate-Shafarevich groups of multinorm one tori of Kummer type}

\author{Jun-Hao Huang}
\address{(Huang) Department of Mathematics, National Taiwan Normal University, Taipei, Taiwan, 116059}
\email{junhao20150115@gmail.com}

\author{Fan-Yun Hung}
\address{(Hung) Institute of Mathematics, Academia Sinica, 
Taipei, Taiwan, 10617}
\email{fanyunhung@gate.sinica.edu.tw}

\author{Pei-Xin Liang}
\address{(Liang) Department of Mathematics, National Tsing-Hua University, Hsin-Chu, Taiwan,  300044}
\email{cindy11420@gmail.com}

\author{Chia-Fu Yu}
\address{(Yu) Institute of Mathematics, Academia Sinica and the National Center for Theoretical Sciences,
Taipei, Taiwan, 10617}
\email{chiafu@math.sinica.edu.tw}


\date{\today}
\subjclass[2010]{11G35, 12G05.} 
\keywords{Multinorm principles, Tate-Shafarevich groups, Multinorm one tori. }  

\maketitle

\begin{abstract}
A multinorm one torus associated to a commutative  \'etale algebra $L$ over a global field $k$ is of Kummer type if each factor of $L$ is a cyclic Kummer extension. In this paper we compute the Tate-Shafarevich group of such tori based on recent works of Bayer-Fluckiger, T.-Y. Lee and Parimala, and of T.-Y.~Lee. We also implement an effective algorithm using SAGE which computes the Tate-Shafarevich groups when each factor of $L$ is contained in a fixed concrete bicyclic extension of $k$.
\end{abstract}

\section{Introduction}
\label{sec:I}

Let $k$ be a global field and let $L=\prod_{i=0}^m K_i$ be a product of finite separable field extensions $K_i$ of $k$. The norm map $N_{L/k}$ from $L$ to $k$ is defined by $N_{L/k}(x):=\prod_{i} N_{K_i/k} (x_i)$ for $x=(x_i)\in L$. 
Let $\A_k$ denote the adele ring of $k$ and $\A_L:=L\otimes_k \A_k=\prod_{i=0}^m \A_{K_i}$ the adele ring of $L$. We have the norm map $N_{L/k}:\A_L^\times \to \A_k^\times$, sending $(x_i)$ to $\prod_{i} N_{K_i/k}(x_i)$. We say that \emph{the multinorm principle} holds for $L/k$ if 
\begin{equation}\label{eq:I.1}
    k^\times \cap N_{L/k}(\A_L^\times)=N_{L/k}(L^\times).
\end{equation}
The quotient group
\begin{equation}\label{eq:I.2}
    \Sha(L/k):=\frac{k^\times \cap N_{L/k}(\A_L^\times)}{N_{L/k}(L^\times)}
\end{equation}
is called the Tate-Shafarevich group of $L/k$, which measures the deviation of the validity of the multinorm principle. 

H\"urlimann~\cite[Proposition~3.3]{hurlimann} showed that the multinorm principle holds for $L=K_0\times K_1$ provided that one of $K_i$ is cyclic and the other is Galois (the second condition is actually superfluous as later proved by \cite[Proposition~4.1]{BLP19}. Pollio and Rapinchuk~\cite[Theorem, p.~803]{pollio-rapinchuk} showed the case when the Galois closures of $K_0$ and $K_1$ are linearly disjoint; the former author proved \cite[Theorem 1]{pollio} that if $K_0$ and $K_1$ are abelian extensions of $k$, then $\Sha((K_0\times K_1)/k)=\Sha((K_0 \cap K_1)/k)$. 
Demarche and D.~Wei~\cite{demarche-wei} constructed a family of examples, showing that the equality $\Sha((K_0\times K_1)/k)= \Sha((K_0 \cap K_1)/k)$ is no longer true when $K_0$ and $K_1$ are non-abelian Galois extensions. 
Bayer-Fluckiger, T.-Y. Lee and Parimala~\cite{BLP19} studied the Tate-Shafarevich group of
general multinorm one tori in which $K_0$ is a cyclic extension. 
Among others, they computed $\Sha(L/k)$ in the case of products of extensions of prime degree $p$. 
Very recently T.-Y.~Lee~\cite{lee2022tate} computed explicitly $\Sha(L/k)$ for the cases where every factor is cyclic of degree $p$-power,  and by the reduction result of \cite{BLP19}, of arbitrary degree. 
The study of the multinorm principle is also inspired by the work of 
Prasad and Rapinchuk \cite{prasad-rapinchuk}, where they settled the problem of the local-global principle for embeddings of fields with involution into simple algebras with involution. \\

The aim of this article is to compute more examples of the Tate-Shafarevich groups of multiple norm one tori based on T.-Y. Lee's general formulas. We consider the étale $k$-algebras $L=\prod_i K_i$, where each $K_i/k$ is a cyclic extension of $p$-power degree and $k$ contains the roots of unity of sufficiently large degree of $p$ (prime to the characteristic of $k$). The idea is to translate all invariants in Lee's formulas from the number-theoretic description into a combinatorial one. This allows us to compute the Tate-Sharafevich groups much more effectively. The main reason is that computing the decomposition groups of a Galois extension of large degree by computer is extremely time-consuming. 
We implement an algorithm using SageMath which computes $\Sha(L)$ for input data where $k=\Q(\zeta_{p^n})$ is the $p^n$-th cyclotomic field and  $K_i$ are cyclic subextensions of a bicyclic extension $k(\ell_1^{1/p^n}, \ell_2^{1/p^n})$ with primes $\ell_1,\ell_2\neq p$, subject to the condition $\cap_{i=0}^m K_i=k$. As our algorithm is a based on a combinatorial description, the computing time does not take much longer when $p$ and $n$ are large. 

This paper is organized as follows. 
In Section~\ref{sec:S}, we organize several results of Bayer-Fluckiger, T.-Y.~Lee and Parimala and describe
formulas for the Tate-Shafarevich groups due to T.-Y.~Lee. Section~\ref{sec:R} discusses the assumptions in Theorem~\ref{thm:Lee}. In Section~\ref{sec:K} we translate all invariants in Lee's formulas from the number-theoretic description into a combinatorial one in the case where $k=\Q(\zeta_{p^n})$ is the $p^n$-th cyclomotic field. Section~\ref{sec:D} computes the decomposition groups of any subfield extension of the aforementioned bicyclic extension $k(\ell_1^{1/p^n}, \ell_2^{1/p^n})$. Putting all together in the last section, we compute the Tate-Shafarevich group of the multinorm one torus in questions and show examples.

\section{The Tate-Shafarevich groups of multinorm one tori}\label{sec:S}
In this section, we organize several results from Bayer-Fluckiger--Lee--Parimala  \cite{BLP19} and describe formulas for the Tate-Shafarevich groups of multinorm one tori due to T.-Y. Lee \cite{lee2022tate}.

\subsection{}\label{sec:S.1} Let $k$ be a global field and $k_s$ be a separable field of $k$ whose Galois group is denoted by $\Gamma_k$. Let $\Omega_k$ be the set of all places of $k$.
Let $T$ be an algebraic torus over $k$. Denote by $\widehat{T}:=\textrm{Hom}_{k_s}(T,\mathbb{G}_m)$ be the character group of $T$; it is a finite free $\Z$-module with a continuous action of $\Gamma_k$. Let $H^i(k,\widehat{T})$ denote the $i$-th Galois cohomology group of $\Gamma_k$ with coefficients in $\widehat{T}$.  
\begin{defn}
The $i$-th \textit{Tate-Shafarevich group} and \textit{algebraic Tate-Shafarevich group} of $\wh T$ are defined by
\[
        \Sha^{i}(k,\widehat{T}):={\rm{Ker}}\left(H^{i}(k,\widehat{T})\rightarrow\prod\limits_{v\in\Omega_{k}}H^{i}(k_{v},\widehat{T})\right)
\]
and
\[
        \Sha_{\omega}^{i}(k,\widehat{T}) := \left\{[C]\in H^{i}(k,\widehat{T})|\text{ }[C]_{v}=0\text{\rm{ for almost all }} v\in\Omega_{k}\right\},
        \]
respectively, where $[C]_v$ is the class of $[C]$ in $H^i(k_v, \wh T)$ under the restriction map $H^i(k, \wh T) \to H^i(k_v, \wh T)$.         
\end{defn}

Let $L=\prod_{i=0}^m K_i$ be an étale algebra over $k$, where each $K_i$ is a cyclic extension of $k$ of degree degree $d_i$ in $k_s$. Let $N_{L/k}: R_{L/k} \G_{m,L}\rightarrow\Gmk$ be the norm morphism, and let $T_{L/k}=\Ker N_{L/k}$ be the multinorm one torus associated to $L/k$. Put $\mathcal{I}=\{1,\ldots,m\}$ and  $K':=\prod_{i\in \mathcal{I}} K_i$.

Let $E:=K_0\otimes_k K'=\prod_{i\in \mathcal{I}}E_i$, where $E_i:=K_0 \otimes_k K_i$. We may regard the $k$-\'etale algebra  $E$ as an \'etale algebra over $K_0$ or over $K'$. 
Let $N_{E/K_0}$ and $N_{E/K'}$ be the norm maps from ${R}_{E/k}\mathbb{G}_{m,E}$ to itself, and define a morphism $f:{R}_{E/k}\mathbb{G}_{m,E} \to R_{L/k}\mathbb{G}_{m, L}$ by  $f(x)=(N_{E/K_0}(x)^{-1},N_{E/K'}(x))$. One easily checks that the image of $f$ is equal to $T_{L/k}$.
Let \textbf{$S_{K_0,K'}$} be the $k$-torus defined by the following exact sequence 
\begin{equation}\label{eq:SKK'.1}
  \begin{CD}
1 \longrightarrow S_{K_0,K'} \longrightarrow {{R}}_{E/k}(\mathbb{G}_{m,E}) \stackrel{f}\longrightarrow T_{L/k} \longrightarrow 1.
\end{CD}  
\end{equation}
The algebraic torus $S_{K_0,K'}$ also fits in the following exact sequence
\begin{equation}\label{eq:SKK'.2}
  \begin{CD}
1 \longrightarrow S_{K_0,K'} \longrightarrow R_{K'/k}(T_{E/K'})  @>N_{E/K_0}>> T_{K_0/k} \longrightarrow 1.
\end{CD}  
\end{equation}
Here $R_{K'/k}(T_{E/K'})=\prod_{i\in \mathcal{I}} R_{K_i/k} (T_{E_i/K_i})$.

\begin{prop}\cite[Lemma 3.1]{BLP19}\label{prop:H1SKK'}
There is a functorial natural isomorphism 
\begin{equation}\label{eq:H1SKK'}
    \Sha^1 (k, \wh S_{K_0,K'})\simeq \Sha^2(k, \wh T_{L/k}).
\end{equation}
\end{prop}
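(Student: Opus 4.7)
The plan is to extract the isomorphism from the long exact sequence of Galois cohomology associated to the character dual of~\eqref{eq:SKK'.1}, after identifying the middle term as a permutation module whose cohomology is easy to control.

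First I would dualize the short exact sequence~\eqref{eq:SKK'.1} to characters, obtaining
\[
0 \longrightarrow \wh T_{L/k} \longrightarrow \wh{R_{E/k}\G_{m,E}} \longrightarrow \wh S_{K_0,K'} \longrightarrow 0.
\]
The middle term is a permutation $\Gamma_k$-module: since $E=\prod_{i\in\mathcal I} E_i$, one has $\wh{R_{E/k}\G_{m,E}} = \bigoplus_{i\in\mathcal I}\Ind_{\Gamma_{E_i}}^{\Gamma_k}\Z$. By Shapiro's lemma, $H^j(k,\wh{R_{E/k}\G_{m,E}}) = \bigoplus_i H^j(E_i,\Z)$, and the same identification holds locally with $k_v$ in place of $k$ (replacing $E_i$ by $E_i\otimes_k k_v = \prod_{w\mid v} E_{i,w}$). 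Now $H^1(F,\Z)=\Hom_{\rm cts}(\Gamma_F,\Z)=0$ for any field $F$, and the exact sequence $0\to\Z\to\Q\to\Q/\Z\to 0$ (combined with $H^{j}(F,\Q)=0$ for $j\geq 1$) yields a canonical isomorphism $H^2(F,\Z)\cong H^1(F,\Q/\Z)$.

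Feeding this into the long exact sequence both globally and locally produces a commutative square
\[
\begin{CD}
0 @>>> H^1(k,\wh S_{K_0,K'}) @>>> H^2(k,\wh T_{L/k}) @>\delta>> \bigoplus_i H^1(E_i,\Q/\Z) \\
@. @VVV @VVV @VVV \\
0 @>>> \prod_v H^1(k_v,\wh S_{K_0,K'}) @>>> \prod_v H^2(k_v,\wh T_{L/k}) @>>> \prod_v\bigoplus_{i,\,w\mid v} H^1(E_{i,w},\Q/\Z)
\end{CD}
\]
with exact rows. Taking kernels of the vertical maps immediately gives an injection $\Sha^1(k,\wh S_{K_0,K'})\hookrightarrow \Sha^2(k,\wh T_{L/k})$, and the construction is manifestly functorial in the data $(k,K_0,K')$.

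For surjectivity, given $[c]\in\Sha^2(k,\wh T_{L/k})$, its image $\delta[c]\in\bigoplus_i H^1(E_i,\Q/\Z)$ is locally trivial at every place of every $E_i$ (by restriction of places), so $\delta[c]\in\bigoplus_i \Sha^1(E_i,\Q/\Z)$. This is where the only real input is needed: each $\Sha^1(E_i,\Q/\Z)=0$, because a continuous character of $\Gamma_{E_i}$ that vanishes on every decomposition subgroup cuts out an abelian extension split at every place, hence trivial by Chebotarev density. Consequently $[c]$ lifts to some $[c']\in H^1(k,\wh S_{K_0,K'})$, and the injectivity of the local maps $H^1(k_v,\wh S_{K_0,K'})\hookrightarrow H^2(k_v,\wh T_{L/k})$ forces $[c']_v=0$ for every $v$, so $[c']\in\Sha^1(k,\wh S_{K_0,K'})$ maps to $[c]$.

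The main (only) obstacle is the vanishing of $\Sha^1(E_i,\Q/\Z)$; everything else is a formal consequence of Shapiro's lemma applied to a permutation module and the connecting-map computation. I would also remark that the identification $H^2(F,\Z)\cong H^1(F,\Q/\Z)$ must be used consistently globally and locally so that the diagram above genuinely commutes, which is automatic from the functoriality of the Bockstein homomorphism.
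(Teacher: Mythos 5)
Your argument is correct, but it reaches the isomorphism by a genuinely different route than the paper. The paper works on the side of the tori themselves: from the long exact sequence attached to \eqref{eq:SKK'.1} it deduces a natural isomorphism between $H^1(k,T_{L/k})$ and $H^2(k,S_{K_0,K'})$ (using that $R_{E/k}\G_{m,E}$ is quasi-trivial, so its $H^1$ vanishes by Shapiro and Hilbert 90, and that its $H^2$ is a sum of Brauer groups satisfying the Hasse principle), and then invokes Poitou--Tate duality twice, namely $\Sha^1(k,T_{L/k})\simeq \Sha^2(k,\wh T_{L/k})^\vee$ and $\Sha^2(k,S_{K_0,K'})\simeq \Sha^1(k,\wh S_{K_0,K'})^\vee$. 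You instead stay entirely on the character side, dualizing \eqref{eq:SKK'.1} and running the same kind of diagram chase there; the only arithmetic input you need is $\Sha^1(F,\Q/\Z)=0$ for a global field $F$ (Chebotarev), which plays exactly the role that the Hasse--Brauer--Noether theorem plays in the dual picture. Your version is more self-contained (no Poitou--Tate), at the cost of redoing a computation that \cite[Lemma 3.1]{BLP19} already packages; the paper's version is shorter given that duality is available and is the one that makes the functoriality in the statement immediate from the functoriality of the duality pairing. Two cosmetic points: the map you label $\delta$ is the map induced on $H^2$ by $\wh T_{L/k}\to \wh{R_{E/k}\G_{m,E}}$, not the connecting homomorphism (which is the injection $H^1(k,\wh S_{K_0,K'})\to H^2(k,\wh T_{L/k})$); and $E_i=K_0\otimes_k K_i$ is in general only an \'etale algebra, not a field, so $\Ind_{\Gamma_{E_i}}^{\Gamma_k}\Z$ should be read as the sum of the inductions from the field factors of $E_i$ --- every fact you use ($H^1(F,\Z)=0$, $H^2(F,\Z)\cong H^1(F,\Q/\Z)$, and the Chebotarev vanishing) holds factor by factor, so the argument is unaffected.
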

It follows from \eqref{eq:SKK'.1} that there is a natural isomorphism $H^1(k,T_{L/k})\simeq H^2(k, S_{K_0,K'})$. Then Proposition~\ref{prop:H1SKK'} follows from the Poitou-Tate duality. 

Define  
\[ \Sha(L):=\Sha^2(k, \wh T_{L/k}) \quad \text{and}\quad \Sha_\omega(L):=\Sha^2_\omega(k, \wh T_{L/k}). \]





For any prime number $p$ and any cyclic extension $M$ of $k$, let $M(p)$ denote the largest subfield of $M$ such that $[M(p):k]$ is a power of $p$. Also, if $p$ divides $[M:k]$, we denote by $M(p)_{\rm prim}$ the unique subfield of $M(p)$ of degree $p$ over $k$.

\begin{prop}
\cite[Propositions 5.16 and 8.6]{BLP19}\label{prop:BLP.1} 
Let $L=\prod_{i=0}^m K_i$ be a product of cyclic extensions of degree $d_i$ over $k$. Set $L(p):=K_0(p)\times K'$ and $L^*(p):=\prod_{i=0}^m K_i(p)$. Then we have isomorphisms
\[\Sha(L)=\mathop{\oplus}\limits_{p|d_0}\Sha(L(p)),\]
and 
\[\Sha(L)=\mathop{\oplus}\limits_{p|d}\Sha(L^*(p)),\]
where $d=\gcd(d_0,\dots, d_m)$. 
\end{prop}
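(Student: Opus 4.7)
The plan is to establish the first identity as the primary decomposition of $\Sha(L)$ into its $p$-components for $p\mid d_0$, and then to deduce the second identity by showing $\Sha(L(p))\simeq\Sha(L^{\ast}(p))$ with a vanishing whenever $p\nmid d_i$ for some $i$. Throughout I would use Proposition~\ref{prop:H1SKK'} to switch between $\Sha(L)=\Sha^2(k,\widehat T_{L/k})$ and $\Sha^1(k,\widehat S_{K_0,K'})$, since the latter is more amenable to lattice-level manipulations.

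For the first decomposition, since $K_0/k$ is cyclic of degree $d_0$, the standard corestriction--restriction argument applied to the subgroup $\Gal(k_s/K_0)\subset\Gamma_k$ shows that $\Sha(L)$ is annihilated by $d_0$; consequently $\Sha(L)$ is the direct sum of its $p$-primary parts for $p\mid d_0$. To identify the $p$-primary part with $\Sha(L(p))$, I would use the tower $k\subset K_0(p)\subset K_0$ to produce a norm morphism $R_{K_0/k}\G_m\to R_{K_0(p)/k}\G_m$, compatibly inducing morphisms $T_{L/k}\to T_{L(p)/k}$ and $S_{K_0,K'}\to S_{K_0(p),K'}$ through the defining sequences \eqref{eq:SKK'.1}--\eqref{eq:SKK'.2}. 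The composition with the natural morphism in the opposite direction is multiplication by $[K_0:K_0(p)]$, which is coprime to $p$, so both induced maps on $p$-primary cohomology are mutually inverse isomorphisms. Since $\Sha(L(p))$ is already $p$-primary (same corestriction--restriction applied to the cyclic $K_0(p)/k$ of $p$-power degree), this gives the isomorphism of the $p$-primary summand of $\Sha(L)$ with $\Sha(L(p))$, and summing over $p\mid d_0$ produces the decomposition.

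For the second decomposition, I would apply the analogous norm-map reduction on each of the remaining factors $K_i$, $i\ge 1$: the norm $R_{K_i/k}\G_m\to R_{K_i(p)/k}\G_m$ again provides a compatible morphism of $S$-tori, whose retraction composition is multiplication by $[K_i:K_i(p)]$, coprime to $p$. Hence $\Sha(L(p))\simeq\Sha(L^{\ast}(p))$ (both already $p$-primary). To justify restricting the sum to $p\mid d$, observe that if $p\nmid d_i$ for some $i$ then $K_i(p)=k$, so $L^{\ast}(p)$ has a trivial factor and the multinorm principle for $L^{\ast}(p)/k$ holds tautologically, whence $\Sha(L^{\ast}(p))=0$. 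Combining with the first decomposition and dropping the vanishing terms gives $\Sha(L)=\bigoplus_{p\mid d}\Sha(L^{\ast}(p))$.

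The most delicate step will be the rigorous verification of the norm-composition identities on the lattice $\widehat S_{K_0,K'}$, in particular checking that the retraction morphisms used in both reduction arguments are well-defined as morphisms of $k$-tori (not merely after inverting the integers $[K_i:K_i(p)]$) and that their compositions are indeed the expected multiplication maps. This requires a careful diagram chase through \eqref{eq:SKK'.1}--\eqref{eq:SKK'.2} identifying the kernels and cokernels as $[K_i:K_i(p)]$-torsion, together with a five-lemma argument on the long exact cohomology sequences; the trivialization argument $\Sha(L^{\ast}(p))=0$ when $K_i(p)=k$ also deserves a separate clean treatment, which can be phrased directly at the level of the torus $T_{L^{\ast}(p)/k}$ splitting off a $\G_m$ factor.
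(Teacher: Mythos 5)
Your proposal is essentially correct, but note that the paper gives no proof of this proposition at all: it is imported verbatim from \cite[Propositions 5.16 and 8.6]{BLP19}, where the argument runs through the auxiliary torus $S_{K_0,K'}$ and the identification $\Sha^1(k,\wh S_{K_0,K'})\simeq\Sha^2(k,\wh T_{L/k})$. Your route via norm retractions on the multinorm tori themselves is more direct and self-contained, and it does work, with two caveats. First, the restriction--corestriction argument for $d_0$-annihilation needs the supplementary fact that a class in $\Sha^2(k,\wh T_{L/k})$ restricts to $\Sha^2(K_0,\cdot)$, which vanishes because $L\otimes_k K_0$ acquires a trivial factor; it is simpler to observe that $x^{d_0}=N_{K_0/k}(x)\in N_{L/k}(L^\times)$ for all $x\in k^\times$, so $\Sha(L/k)$ is killed by $d_0$ outright. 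Second, your assertion that the two composites of torus morphisms equal multiplication by $q:=[K_0:K_0(p)]$ is not literally true. The backward map has to be taken as $(x_0,x_1,\dots,x_m)\mapsto(\iota(x_0),x_1^q,\dots,x_m^q)$, since the plain inclusion $R_{K_0(p)/k}\mathbb{G}_{m}\hookrightarrow R_{K_0/k}\mathbb{G}_{m}$ does not preserve the norm-one condition; the composite $T_{L(p)/k}\to T_{L/k}\to T_{L(p)/k}$ is then the $q$-th power map, but the composite $T_{L/k}\to T_{L(p)/k}\to T_{L/k}$ has $K_0$-component $\iota\circ N_{K_0/K_0(p)}$, a partial norm endomorphism which is \emph{not} the $q$-th power map on $R_{K_0/k}\mathbb{G}_{m,K_0}$. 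What rescues the argument is that both composites nonetheless induce multiplication by $q$ on $H^1(k,T_{L/k})\cong k^\times/N_{L/k}(L^\times)$ (the forward map is the identity on $k^\times$, the backward map is $x\mapsto x^q$ there), hence on $\Sha^1(k,T_{L/k})$ and, dually, on $\Sha(L)=\Sha^2(k,\wh T_{L/k})$; this is seen most cleanly from the concrete description of $H^1$ of a multinorm one torus, rather than from the five-lemma chase through \eqref{eq:SKK'.1}--\eqref{eq:SKK'.2} you propose, which would run into exactly this discrepancy. With that correction the remainder is fine: replace the factors $K_i$, $i\ge 1$, by $K_i(p)$ one at a time (the integers $[K_i:K_i(p)]$ vary with $i$, so a single global retraction would need unequal exponents), each step being an isomorphism on the $p$-primary groups involved, and the vanishing of $\Sha(L^*(p))$ when some $K_i(p)=k$ is indeed tautological from the trivial factor.
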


Note that if $K_i(p)=k$ for some $i$, then $\Sha(L^*(p))=0$. Thus, if $p\nmid d$, then $\Sha(L^*(p))=0$.

\begin{thm}\label{thm:BLP.2} Let the notation be as Proposition~\ref{prop:BLP.1}. Assume that the field extensions $K_i(p)$ are linearly disjoint over $k$. Then 
\[\Sha(L^*(p))=0\Longleftrightarrow  \Sha(L^*(p)_{\rm prim})=0,\]
where $L^*(p)_{\rm prim}:=\prod_{i=0}^{m}K_i(p)_{\rm prim}.$
\end{thm}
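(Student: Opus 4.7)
The plan is to use Proposition~\ref{prop:H1SKK'} to translate both sides of the biconditional into questions about $\Sha^{1}$ of the auxiliary tori $S_{K_{0}(p),K'(p)}$ and $S_{K_{0}(p)_{\rm prim},K'(p)_{\rm prim}}$, where $K'(p):=\prod_{i\in\mathcal{I}}K_{i}(p)$ and $K'(p)_{\rm prim}:=\prod_{i\in\mathcal{I}}K_{i}(p)_{\rm prim}$. For each $i$, the inclusion $K_{i}(p)_{\rm prim}\hookrightarrow K_{i}(p)$ together with the norm $N_{K_{i}(p)/K_{i}(p)_{\rm prim}}$ in the opposite direction induces natural morphisms between the defining exact sequences \eqref{eq:SKK'.1} and \eqref{eq:SKK'.2} for the two tori, and hence between their character modules. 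The first step is to write these morphisms down explicitly and identify the induced maps on $\Sha^{1}$.

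Next, I would exploit the linear disjointness hypothesis. The compositum $M=K_{0}(p)\cdots K_{m}(p)$ has Galois group $G=\prod_{i=0}^{m}G_{i}$ with $G_{i}=\Gal(K_{i}(p)/k)\cong \Z/p^{n_{i}}\Z$, and both character modules can be described purely in terms of the $G_{i}$. Since each $G_{i}$ has a unique quotient of order $p$, passing from $K_{i}(p)$ to $K_{i}(p)_{\rm prim}$ corresponds on the Galois side to the surjection $G_{i}\twoheadrightarrow G_{i}/pG_{i}$. Thanks to the product structure of $G$, the decomposition group $D_{v}\subset G$ at any place $v$ of $k$ decomposes as $\prod_{i} D_{v,i}$ with $D_{v,i}\subset G_{i}$, and the local-triviality conditions defining $\Sha^{1}$ can be analyzed factor by factor. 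This reduces the problem to a purely combinatorial comparison between cyclic $p$-groups and their mod-$p$ quotients.

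The direction $\Sha(L^{*}(p))=0\Rightarrow\Sha(L^{*}(p)_{\rm prim})=0$ should then follow by inflation along the quotients $G_{i}\twoheadrightarrow G_{i}/pG_{i}$: a nontrivial class at the primitive level pulls back to a nontrivial class at the $p$-power level, and its local triviality at $v$ is preserved because $D_{v,i}$ surjects onto its image in $G_{i}/pG_{i}$. The converse is the main obstacle, since one must show that any nontrivial class at the full $p$-power level already forces the existence of a nontrivial class at the primitive level, i.e.\ that no Sha is lost under the mod-$p$ reduction. Here I would invoke the explicit formula for $\Sha$ from \cite{lee2022tate}, which, in the linearly disjoint setting, produces a combinatorial generating set for $\Sha(L^{*}(p))$ indexed by the family $\{D_{v,i}\}$; checking that these generators all vanish is equivalent, under the mod-$p$ reduction of the $D_{v,i}$, to the vanishing of the analogous generating set for $\Sha(L^{*}(p)_{\rm prim})$, which completes the argument.
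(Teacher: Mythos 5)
There is a genuine gap, and it sits at the load-bearing step of your sketch. Linear disjointness gives $\Gal(M/k)\cong\prod_{i}G_{i}$ for the compositum $M=K_{0}(p)\cdots K_{m}(p)$, but it does \emph{not} follow that a decomposition group $D_{v}\subset G$ factors as $\prod_{i}D_{v,i}$ with $D_{v,i}\subset G_{i}$. For an unramified place, $D_{v}$ is the cyclic group generated by $(\mathrm{Frob}_{v}|_{K_{0}(p)},\dots,\mathrm{Frob}_{v}|_{K_{m}(p)})$, a ``diagonal'' subgroup of the product of its projections, and in general a proper one. Your entire factor-by-factor analysis of the local-triviality conditions, and the subsequent claim that local triviality is preserved under the mod-$p$ reductions $G_{i}\twoheadrightarrow G_{i}/pG_{i}$ ``because $D_{v,i}$ surjects onto its image,'' rests on this false factorization. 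Worse, the failure of decomposition groups to split is precisely where all the content of the theorem lives: the nonvanishing criterion in Proposition~\ref{prop:BLP.3} is a condition on decomposition groups of a bicyclic field being proper (possibly diagonal) subgroups, and Lee's counterexample to the unconditional statement (Remark~\ref{rem:2.6}) is driven by exactly this phenomenon. An argument that makes the local conditions decouple across the factors $G_{i}$ cannot see the difficulty the theorem is actually about. The final appeal to ``the explicit formula from \cite{lee2022tate} indexed by the family $\{D_{v,i}\}$'' is also not something that formula provides: Theorem~\ref{thm:Lee} is expressed through patching degrees and degrees of freedom, i.e.\ local cyclicity of various composita, not through per-factor decomposition data.

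For calibration: the paper does not reprove this statement at all; it cites \cite[Theorem 8.1]{BLP19} and merely records that the linear-disjointness hypothesis must be added because the proof there relies on \cite[Proposition 5.13]{BLP19}, which needs that hypothesis. So a self-contained proof along your lines would have to reconstruct a nontrivial argument from \cite{BLP19}; your first step (passing to $\Sha^{1}$ of the tori $S_{K_{0},K'}$ via Proposition~\ref{prop:H1SKK'}) is indeed how that reference proceeds, but the reduction from $p$-power degree to degree $p$ there is done by a careful comparison of the groups $H^1(k,\wh S_{K_0,K'})$ using the structure of $E_i=K_0\otimes_k K_i$ over $K_i$, not by splitting decomposition groups. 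You would need to replace your second and third paragraphs with an argument that handles diagonal decomposition subgroups honestly.
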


\begin{proof}
This is \cite[Theorem 8.1]{BLP19}. Note that we add an additional condition that $\{ K_i(p) \}$ are linearly disjoint over $k$. This is because that the proof relies on \cite[Proposition~5.13]{BLP19}, which should be modified by adding this condition; see also Remark~\ref{rem:2.6}. \qed
\end{proof}

By Theorem~\ref{thm:BLP.2}, it is important to compute $\Sha(L)$ in the case where $L$ is a product of cyclic extensions of degree $p$. More generally we have the following result 
\cite[Proposition 8.5]{BLP19}.
\begin{prop}\label{prop:BLP.3}
Let $p$ be a prime number, and $L=\prod_{i=0}^m K_i$ a product of distinct field extensions of degree $p$ such that $K_0/k$ is cyclic. 
Then $\Sha(L)\neq 0$ only if every field $K_i$ is contained a field extension $F/k$ of degree $p^2$ and all local degrees of $F$ are $\le p$. 
Moreover, if the above condition is satisfied, then $\Sha(L)\simeq (\Z/p\Z)^{m-1}$.
\end{prop}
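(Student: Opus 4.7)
The plan is to combine T.-Y.~Lee's explicit formula for $\Sha(L)$ (Theorem~\ref{thm:Lee}) with the reformulation $\Sha(L)\cong \Sha^1(k,\wh S_{K_0,K'})$ of Proposition~\ref{prop:H1SKK'}. Because every $K_i$ has prime degree $p$, the character lattices in play are small and the relevant cohomological invariants can be extracted by a direct combinatorial analysis.

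First I would show that if $\Sha(L)\neq 0$, then every $K_i$ must lie in a common degree-$p^2$ extension $F/k$. Using the exact sequence~\eqref{eq:SKK'.2}, $S_{K_0,K'}$ is assembled from the one-dimensional tori $T_{E_i/K_i}$, where $E_i=K_0\otimes_k K_i$. Since $K_0\ne K_i$ and both have prime degree, each $E_i$ is either a field of degree $p^2$ or its Galois closure is strictly larger; in the latter ``generic'' case, the Pollio--Rapinchuk theorem gives the multinorm principle for $K_0\times K_i$, and an inspection of Lee's formula propagates this to kill the global obstruction. What survives is the situation where all the $E_i$ become isomorphic to a single field $F/k$ of degree $p^2$ containing $K_0$, which is the stated geometric condition on the $K_i$'s.

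Next I would extract the local-degree constraint. The restriction of an element of $\Sha(L)$ at a place $v$ of $k$ is controlled by the decomposition group of $F/k$ at $v$. If some completion $F_w/k_v$ has degree $p^2$, then this decomposition group equals all of $\Gal(F/k)$, and the localization map at $v$ becomes injective on the relevant cohomology; hence any class in $\Sha(L)$, being locally trivial everywhere, must vanish globally. This forces every local degree of $F$ over $k$ to be at most $p$, giving the second half of the necessary condition.

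Finally, under both conditions I would compute $\Sha(L)$ explicitly. With $\geq 2$ distinct degree-$p$ subfields inside the degree-$p^2$ extension $F$, its Galois group over $k$ is $(\Z/p\Z)^2$, and Lee's formula reduces to a Tate-cohomology calculation of this bicyclic group acting on a lattice of rank $(m+1)(p-1)$ coming from $\wh S_{K_0,K'}$. Among the $m+1$ classes indexed by the $K_i$, one relation is imposed by the defining kernel condition of $T_{L/k}$ and another by the cyclicity of $K_0$, leaving $m-1$ independent factors of $\Z/p\Z$. The main obstacle is the uniformity step in the second paragraph: turning the vanishing of local obstructions at individually ``generic'' factors $E_i$ into a global splitting of the short exact sequence for $\wh S_{K_0,K'}$ requires careful bookkeeping of which Galois-equivariant maps of character lattices survive in Lee's combinatorial recipe.
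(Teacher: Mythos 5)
The paper does not actually prove Proposition~\ref{prop:BLP.3}: it is quoted verbatim from \cite[Proposition~8.5]{BLP19}, and the only in-house justification is the consistency check after \eqref{eq:2.7}, where one observes that \emph{when all the $K_i$ are cyclic} Theorem~\ref{thm:Lee} specializes to it (here $\calR=\{0\}$, $U_0=\calI$ is a single $0$-equivalence class with $n_1(U_0)=m$, so $\Sha(L)\simeq(\Z/p^{f_{U_0}}\Z)^{m-1}$ with $f_{U_0}\in\{0,1\}$, and $f_{U_0}=1$ exactly under the stated geometric and local conditions). Your strategy is essentially this route, but two steps fail. First, a scope gap: Theorem~\ref{thm:Lee} applies only to products of \emph{cyclic} extensions, whereas in Proposition~\ref{prop:BLP.3} only $K_0$ is assumed cyclic and the remaining $K_i$ are arbitrary degree-$p$ fields; your argument never handles the non-Galois factors, which is precisely where \cite{BLP19} has to work harder. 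Second, the mechanism you propose for the necessity of a common $F$ is broken. Since $K_0/k$ is cyclic and $K_0\cap K_i=k$, each $E_i=K_0\otimes_k K_i$ is automatically a field of degree $p^2$, and the multinorm principle for $K_0\times K_i$ holds \emph{unconditionally} by H\"urlimann \cite[Proposition~3.3]{hurlimann} --- it holds both when $\Sha(L)=0$ and when $\Sha(L)\simeq(\Z/p\Z)^{m-1}$, so the pairwise principle cannot ``propagate to kill the global obstruction''; it carries no information about the full product. The actual dichotomy is a condition on the whole collection, namely whether the fields $K_0K_i$ all coincide (equal as subfields of $k_s$, not merely isomorphic), equivalently whether $M_{U_0}(1)$ contains $K_0$ and lies in a bicyclic extension; this is what $f_{U_0}\geq 1$ encodes, and your second paragraph asserts it rather than proves it.

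Your third step --- if some decomposition group of $F/k$ is the whole group then localization at that place is injective on the relevant cohomology, forcing all local degrees $\le p$ --- is the right idea and matches the ``locally cyclic'' condition in \eqref{eq:2.7}. The final count, however, is also off: from \eqref{eq:SKK'.1} the lattice $\wh S_{K_0,K'}$ has rank $mp^2-(m+1)p+1$, not $(m+1)(p-1)$, and ``one relation from the kernel condition and one from cyclicity of $K_0$'' is a heuristic rather than a computation; the clean source of the exponent $m-1$ is $n_{1}(U_0)-1$ in Theorem~\ref{thm:Lee}. To turn your sketch into a proof you would either have to redo the cohomological computation of $\Sha^1(k,\wh S_{K_0,K'})$ directly for possibly non-Galois $K_i$ (as in \cite{BLP19}), or first reduce to the cyclic case before invoking Lee's formula.
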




\begin{remark}\label{rem:2.6}
Theorem 8.1 of \cite{BLP19} is incorrect as stated. 
Lee gave 
a counterexample to it (\cite[Example 7.7]{lee2022tate}):
Let $k=\Q(i), K_0 = k ( \sqrt[4]{13}), 
K_1 = k(\sqrt[4]{17})$ and 
$k(\sqrt[4]{13\cdot 17^2})$. 
We have $\Sha(L)=\Z/2\Z$, while $L_{\rm prim}=k ( \sqrt{13})\times k(\sqrt{17})\times k ( \sqrt{13})$ and $\Sha(L_{\rm prim})=0$. 
\end{remark}

\subsection{}
In what follows, we let $L=\prod_{i=0}^{m} K_i$, where $K_i$ are \emph{cyclic extensions} of $k$ of degree $p^{\epsilon_{i}}$. Assume ${\cap}_{i=0}^{m} K_{i}=k$ and $\epsilon_0=\min_{0\le i\le m}\{\epsilon_i\}$. 
For any $i,j\in\mathcal{I}$, we set
\begin{enumerate}
    \item[(i)]
    $p^{e_{i,j}}=[K_{i}\cap K_{j}:k]$, and
    \item[(ii)]
    $e_{i}=\epsilon_{0}-e_{0,i}$.
\end{enumerate}
Without loss of generality, we assume that $e_{i}\geq e_{i+1}$, and notice that $e_{1}=\epsilon_{0}$ since $K_0\cap K_1=k$. 
Note that $p^{e_i}=[M_i:K_i]$, where $M_i=K_0K_i$ and one has $H^1(k, \wh T_{E_i/K_i})\simeq \Z/p^{e_i} \Z$. 

For any $0\le d \le \epsilon_i$, let $K_i(d)$ denote the subfield of $K_i$ of degree $p^d$ over $k$.\footnote{Do not confuse this with the notation $K_i(p)$ in Section~\ref{sec:S.1}.} For a nonempty subset $c\subseteq\mathcal{I}$ and an integer $d>0$, set $M_{c}(d):=\langle K_{i}(d)\rangle_{i\in c}$, the composite of $K_i(d)$ for $i\in c$. For $0\leq r\leq\epsilon_{0}$. Set
\[ \text{$U_{r}:=\{i\in\mathcal{I}|\text{ }e_{0,i}=r\}$, \quad $U_{>r}:=\{i\in\mathcal{I}|\text{ }e_{0,i}>r\}$,\quad and \quad $U_{<r}:=\{i\in\mathcal{I}|\text{ }e_{0,i}<r\}$.} \]

In order to describe formulas for the Tate-Shafarevich group of multinorm one tori due to T.-Y. Lee, we need to introduce the following invariants.
\begin{defn}
For a nonempty set $U_{r}$, the \emph{algebraic patching degree} $\Delta_{r}^{\omega}$ of $U_{r}$ is the largest nonnegative integer $d\le \epsilon_0$ satisfying the following two conditions:
\begin{enumerate}[label = (\roman*)]
    \item If $U_{>r}$ is nonempty, then $M_{U_{>r}}(d)\subseteq\mathop{\cap}\limits_{i\in{U_{r}}}K_{0}(d)K_{i}(d)$.
    \item If $U_{<r}$ is nonempty, then $M_{U_{r}}(d)\subseteq\mathop{\cap}\limits_{i\in U_{<r}}K_{0}(d)K_{i}(d)$.
\end{enumerate}
If $U_{r}=\mathcal{I}$ (so $r=0$), then we set $\Delta_{0}^{\omega}=\epsilon_{0}$.
\end{defn}
    
We say that a field extension $M$ of $k$ is \emph{locally cyclic} if its completion $M\otimes_{k} k_{v}$ at $v$ is a product of cyclic extensions of $k_v$ for all places $v\in\Omega_k$.  Moreover, if $M$ is a finite Galois extension of $k$, then $M/k$ is locally cyclic if and only if every decomposition group of $M$ over $k$ is cyclic.
\begin{defn}
The \emph{patching degree} $\Delta_{r}$ of $U_{r}$ is the largest nonnegative integer $d\le \Delta^\omega_r$ satisfying the following two conditions:
\begin{enumerate}
    \item[\rm{(i)}] 
    If $U_{>r}$ is nonempty, then
    $K_{0}(d)M_{U_{>r}}(d)$ is locally cyclic. \item[\rm{(ii)}]
    If $U_{<r}$ is nonempty, then $K_{0}(d)M_{U_{r}}(d)$ is locally cyclic. 
\end{enumerate}
If $U_{0}=\mathcal{I}$, then we set $\Delta_{0}=\epsilon_{0}$.
\end{defn}
\begin{defn}
Let $i,j \in \mathcal{I}$ and $l$ be a nonnegative integer. We say that $i,j$ are \emph{$l$-equivalent} and denoted by $i\mathop{\sim}\limits_{l}j$ if $e_{i,j}\geq l$ or $i=j$. For any nonempty subset $c$ of $\mathcal{I}$, let $n_{l}(c)$ be the number of $l$-equivalence classes of $c$.
\end{defn}
\begin{defn}
For each $c \subseteq\mathcal{I}$ with $|c|\geq 1$, the \emph{level} of $c$ is defined by 
\[L(c):=\min \{e_{i,j}: i,j\in c\}. \]  
\end{defn}
\begin{defn}
\begin{enumerate}[label = (\arabic*)]
    \item For a nonempty set $U_{r}$, let $l_{r}=L(U_{r})$ and let $f^{\omega}_{U_r}$ be the largest nonnegative integer $f \leq \Delta_{r}^{\omega}$ satisfying the following two conditions:
    \begin{enumerate}[label = (\roman*)]
        \item The field $M_{U_{r}}(f+l_{r}-r)$ is a subfield of a bicyclic extension.
        \item $K_{0}(f)\subseteq M_{U_{r}}(f+l_{r}-r)$.
    \end{enumerate}
    We call $f_{U_{r}}^{\omega}$ the \emph{algebraic degree of freedom} of $U_{r}$.

    \item Similarly, for any $h$-equivalence class $c\subset U_r$ with $h\geq L(U_r)$, the \emph{algebraic degree of freedom} of $c$, denoted by $f_{c}^{\omega}$, is the largest nonnegative integer $f\leq\Delta_{r}^{\omega}$ satisfying the following two conditions:
    \begin{enumerate}[label = (\roman*)]
        \item The field $M_{c}(f+L(c)-r)$ is a subfield of a bicyclic extension.
        \item $K_0(f)\subseteq M_{c}(f+L(c)-r)$.
    \end{enumerate}
\end{enumerate}
  
\end{defn}
 

According to the definition one has $r\le f_c^\omega \le f_{U_r}^\omega \le \Delta^\omega_r$.   
  
\begin{defn}
Let $c\subset U_r$ be an $h$-equivalence class for some $h\geq L(U_{r})$. The \emph{degree of freedom} $f_{c}$ of $c$ is defined to be the largest nonnegative integer $f\leq f_{c}^{\omega}$ such that $M_{c}(f+L(c)-r)$ is locally cyclic.
\end{defn}

\begin{thm} \cite[Theorem 6.5]{lee2022tate}  \label{thm:Lee}
Let $T_{L/k}$ be the multinorm one torus associated to a $k$-\'etale algebra $L=\prod_{i=0}^m K_i$. We have
$$
\Sha_{\omega}^{2}(k,\widehat{T}_{L/k})\cong\mathop{\oplus}\limits_{r\in\mathcal{R}\setminus\{0\}}\mathbb{Z}/p^{{\Delta_{r}^{w}}-r}\mathbb{Z}\mathop{\oplus}\limits_{r\in\mathcal{R}}\mathop{\oplus}\limits_{l\geq L(U_{r})}\mathop{\oplus}\limits_{c\in U_r/\mathop{\sim}\limits_{l}}(\mathbb{Z}/p^{f_{c}^{w}-r}\mathbb{Z})^{n_{l+1}(c)-1};$$ 
$$\Sha^{2}(k,\widehat{T}_{L/k})\cong\mathop{\oplus}\limits_{r\in\mathcal{R}\setminus\{0\}}\mathbb{Z}/p^{{\Delta_{r}}-r}\mathbb{Z}\mathop{\oplus}\limits_{r\in\mathcal{R}}\mathop{\oplus}\limits_{l\geq L(U_{r})}\mathop{\oplus}\limits_{c\in U_r/\mathop{\sim}\limits_{l}}(\mathbb{Z}/p^{f_{c}-r}\mathbb{Z})^{n_{l+1}(c)-1},
$$
where $\mathcal{R}=\{0\leq r\leq\epsilon_0|\text{ } U_{r}\neq\emptyset\}$.
\end{thm}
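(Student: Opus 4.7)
The plan is to identify $\Sha^2(k,\wh T_{L/k})$ with $\Sha^1(k,\wh S_{K_0,K'})$ via Proposition~\ref{prop:H1SKK'}, and then analyze the latter using the exact sequence \eqref{eq:SKK'.2} together with a careful stratification of the index set $\mathcal{I}$. Dualizing \eqref{eq:SKK'.2} produces a short exact sequence of Galois modules
\[
0 \to \wh T_{K_0/k} \to \prod_{i\in \mathcal{I}} \wh{R_{K_i/k}(T_{E_i/K_i})} \to \wh S_{K_0,K'} \to 0,
\]
and the associated long exact sequence in Galois cohomology expresses $H^1(k,\wh S_{K_0,K'})$ in terms of the groups $H^1(K_i,\wh T_{E_i/K_i})\simeq \Z/p^{e_i}\Z$, obtained by Shapiro's lemma and the cyclicity of $E_i/K_i$ of degree $p^{e_i}$. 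A class in $\Sha^1(k,\wh S_{K_0,K'})$ therefore corresponds to a tuple $(a_i)_{i\in\mathcal{I}}$ with $a_i\in\Z/p^{e_i}\Z$, modulo the image of the $H^0$ term, subject to local triviality at every place (for $\Sha^1$) or at almost all places (for $\Sha^1_\omega$).

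Next, I would partition $\mathcal{I}$ by $r=e_{0,i}$ into the strata $U_r$, and within each stratum further by $l$-equivalence. The algebraic invariants $\Delta_r^\omega$ and $f_c^\omega$ are designed to quantify the obstruction to realising an element of $\prod_i \Z/p^{e_i}\Z$ as a cocycle class using only subfield containment conditions, while the refined invariants $\Delta_r$ and $f_c$ impose the additional local-cyclicity restriction forced by Tate--Nakayama duality at each place. The splitting into summands $\Z/p^{\Delta_r^\omega - r}\Z$ and $(\Z/p^{f_c^\omega - r}\Z)^{n_{l+1}(c)-1}$ reflects a two-step filtration: first patch a common character across an entire stratum $U_r$ relative to $K_0$, then choose independent perturbations within each $l$-equivalence class $c$ modulo the coarser patched value, yielding the combinatorial multiplicity $n_{l+1}(c) - 1$.

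The hard part will be verifying this matching precisely. One needs to control (i) the image of the connecting map $H^0(k,\wh S_{K_0,K'}) \to H^1(k,\wh T_{K_0/k})$ in combinatorial terms via the structure of $\Gal$ of the composite $\langle K_0, K_i\rangle_{i\in\mathcal{I}}$, so that the ``modulo image'' quotient matches the shift $\Delta_r^\omega \mapsto \Delta_r^\omega - r$ appearing in the formula, and (ii) for each place $v$, translate the local triviality of a patched class into the requirement that the relevant decomposition groups be cyclic of controlled order, which is exactly what the ``locally cyclic'' condition in the definition of $\Delta_r$ and $f_c$ encodes. Once these identifications are established, the $\Sha^2$ formula drops out of the $\Sha^2_\omega$ formula by systematically replacing the algebraic patching degrees and degrees of freedom with their locally cyclic refinements, and the two statements can be proved in parallel.
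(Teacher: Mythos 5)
This theorem is not proved in the paper at all: it is quoted verbatim from Lee \cite[Theorem 6.5]{lee2022tate}, so there is no in-paper argument to compare against, and any ``proof'' here would have to reproduce a substantial part of Lee's paper. Your outline does follow the strategy that Lee (and \cite{BLP19}) actually use --- pass to $\Sha^1(k,\wh S_{K_0,K'})$ via Proposition~\ref{prop:H1SKK'}, dualize the sequence \eqref{eq:SKK'.2}, apply Shapiro's lemma to write candidate classes as tuples in $\bigoplus_{i}\Z/p^{e_i}\Z$, and then impose the global and local conditions --- so the skeleton is the right one.

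However, as a proof the proposal has a genuine gap: everything that constitutes the actual content of the theorem is deferred. You say the invariants $\Delta_r^{\omega}$, $f_c^{\omega}$, $\Delta_r$, $f_c$ ``are designed to quantify'' the relevant obstructions and that ``the hard part will be verifying this matching precisely,'' but that matching \emph{is} the theorem: one must compute the image of $H^1(k,\wh T_{K_0/k})$ inside $\bigoplus_i \Z/p^{e_i}\Z$, translate local triviality at each place into conditions on decomposition groups, and then show that the resulting subquotient decomposes as a direct sum with exactly the exponents $\Delta_r^{\omega}-r$, $f_c^{\omega}-r$ and the multiplicities $n_{l+1}(c)-1$. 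None of this is carried out, and it occupies several sections of \cite{lee2022tate}. Two smaller inaccuracies in the setup: the tuple description of a class in $\Sha^1(k,\wh S_{K_0,K'})$ is taken modulo the image of $H^1(k,\wh T_{K_0/k})$ (not of the $H^0$ term), and to know that every $\Sha$-class is represented by such a tuple one must use that the connecting map lands in $\Sha^2(k,\wh T_{K_0/k})$, which vanishes precisely because $K_0/k$ is cyclic --- this is where that hypothesis enters and it should be made explicit.
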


\subsection{} We use Theorem~\ref{thm:Lee} to revisit the criterion for the vanishing of the groups $\Sha_{\omega}^{2}(k,\widehat{T}_{L/k})$ and $\Sha^{2}(k,\widehat{T}_{L/k})$ \cite[Theorem 8.1]{BLP19}. 

\begin{defn}
A subset $c \subset \calI$ with $|c|>1$ is said to be \emph{admissible} if $c$ is an $l$-equivalence class in $U_r$ for some $r\ge 0$. The integer $r$, denoted ${\rm supp}(c)$, is called the support of $c$. Let ${\rm Adm}$ be the set of admissible subsets of $\calI$.
\end{defn}

Theorem~\ref{thm:Lee} can be reformulated as follows.

\begin{thm}\label{thm:reformulate} We have 
\begin{align*}
    \Sha_{\omega}^{2}(k,\widehat{T}_{L/k})&\cong\mathop{\oplus}\limits_{r\in\mathcal{R}\setminus\{0\}}\mathbb{Z}/p^{{\Delta_{r}^{w}}-r}\mathbb{Z}
    \oplus \mathop{\oplus}\limits_{c\in {\rm Adm}}
    (\mathbb{Z}/p^{f_{c}^{w}-r}\mathbb{Z})^{n_{L(c)+1}(c)-1}; \\
    \Sha^{2}(k,\widehat{T}_{L/k})&\cong\mathop{\oplus}\limits_{r\in\mathcal{R}\setminus\{0\}}\mathbb{Z}/p^{{\Delta_{r}}-r}\mathbb{Z} \mathop{\oplus} \mathop{\oplus}\limits_{c\in {\rm Adm}}
    (\mathbb{Z}/p^{f_{c}-r}
    \mathbb{Z})^{n_{L(c)+1}(c)-1}.
\end{align*}
\end{thm}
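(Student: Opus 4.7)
The plan is a reindexing of the iterated sum in Theorem~\ref{thm:Lee}. Since singleton classes $c$ give $n_{l+1}(c)=1$ and contribute the trivial group, only sets $c\subseteq U_r$ with $|c|>1$ that occur as an $l$-equivalence class of $U_r$ for some $l\geq L(U_r)$ matter---these are precisely the admissible subsets. So it suffices to show that for each $c\in{\rm Adm}$ with $r:={\rm supp}(c)$, one has
\[
\bigoplus_{\substack{l\geq L(U_r)\\ c\,\in\, U_r/\sim_l}}(\mathbb{Z}/p^{f_c^{\omega}-r}\mathbb{Z})^{n_{l+1}(c)-1}\;\cong\;(\mathbb{Z}/p^{f_c^{\omega}-r}\mathbb{Z})^{n_{L(c)+1}(c)-1},
\]
together with the analogous identity obtained by replacing $f_c^\omega$ by $f_c$; the $\Delta_r^\omega$ and $\Delta_r$ summands in Theorem~\ref{thm:Lee} are carried over untouched.

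The key structural input is the ultrametric inequality $e_{i,k}\geq\min(e_{i,j},e_{j,k})$ for $i,j,k\in U_r$. This follows at once from the chain structure of subfields of the cyclic $p$-extension $K_j/k$: the unique subfield of $K_j$ of degree $p^{\min(e_{i,j},e_{j,k})}$ over $k$ lies inside both $K_i\cap K_j$ and $K_j\cap K_k$, hence inside $K_i\cap K_k$. Consequently $\sim_l$ is a genuine equivalence relation on $U_r$, and a set $c\subseteq U_r$ with $|c|>1$ is an $l$-equivalence class of $U_r$ precisely when $l$ lies in the interval $E(c)<l\leq L(c)$, where $E(c):=\max_{i\in c,\,j\in U_r\setminus c}e_{i,j}$ (with the convention that the lower constraint is vacuous when $c=U_r$, so that the interval collapses to $\{L(U_r)\}$). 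Admissibility of $c$ is exactly the condition that this interval is nonempty.

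To finish, I evaluate $n_{l+1}(c)$ across this interval. For any $l<L(c)$ one has $l+1\leq L(c)$, so every pair in $c$ satisfies $e_{i,j}\geq l+1$; thus $c$ forms a single $(l+1)$-equivalence class of itself, giving $n_{l+1}(c)=1$ and a trivial summand. The only nonzero contribution therefore comes from $l=L(c)$, and it equals $(\mathbb{Z}/p^{f_c^\omega-r}\mathbb{Z})^{n_{L(c)+1}(c)-1}$, proving the displayed identity; the same bookkeeping, with $f_c$ and $\Delta_r$ in place of $f_c^\omega$ and $\Delta_r^\omega$, handles $\Sha^2$. The only step beyond mechanical index juggling is the ultrametric property of $\{e_{i,j}\}$, which I do not expect to pose a difficulty.
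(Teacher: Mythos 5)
Your proof is correct and takes essentially the same route as the paper, which states Theorem~\ref{thm:reformulate} without proof as a direct reindexing of Theorem~\ref{thm:Lee}. Your argument --- singleton classes contribute trivially, the ultrametric inequality $e_{i,k}\geq\min(e_{i,j},e_{j,k})$ (from the chain structure of subfields of a cyclic $p$-extension) makes $\sim_l$ a genuine equivalence relation whose classes persist over an interval of $l$, and within that interval only $l=L(c)$ yields $n_{l+1}(c)>1$ --- supplies exactly the bookkeeping the paper leaves implicit.
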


\begin{prop}\label{prop:vanishing}
Let $r_0>0$ be the smallest integer such that $U_r$ is nonempty.
\begin{enumerate} 
    \item[{\rm (1)}] We have $\Sha_{\omega}^{2}(k,\widehat{T}_{L/k})=0$ if and only if $\Delta^\omega_{r_0}=r_0$ and $f^\omega_{U_0}=0$.
    \item[{\rm (2)}] We have $\Sha^{2}(k,\widehat{T}_{L/k})=0$ if and only if $\Delta_{r_0}=r_0$ and $f_{U_0}=0$.
\end{enumerate}
\end{prop}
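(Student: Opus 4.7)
The plan is to apply Theorem~\ref{thm:reformulate} directly. Since $\Sha_\omega^2(k,\widehat{T}_{L/k})$ is expressed there as a direct sum of cyclic $p$-groups, the group vanishes if and only if every summand is trivial. Unwinding, this is equivalent to demanding $\Delta_r^\omega=r$ for every $r\in\mathcal{R}\setminus\{0\}$, together with, for every admissible $c$ with $\mathrm{supp}(c)=r$, the alternative $n_{L(c)+1}(c)=1$ or $f_c^\omega=r$. The job of the proof is to show these two (potentially many) conditions collapse to the single pair stated in the proposition.

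First I would invoke the fundamental inequality chain
\[
r\leq f_c^\omega\leq f_{U_r}^\omega\leq \Delta_r^\omega,
\]
which is immediate from the definitions of $f_c^\omega$, $f_{U_r}^\omega$ and $\Delta_r^\omega$. As soon as $\Delta_r^\omega=r$ holds, this chain forces $f_c^\omega=r$ for every admissible $c\subseteq U_r$, so the admissibility condition becomes automatic at every positive level. The only remaining admissibility content is at $r=0$, i.e.\ for admissible $c\subseteq U_0$, where the condition $f_c^\omega=0$ is in turn implied by $f_{U_0}^\omega=0$ via the same inequality.

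It remains to reduce the equality $\Delta_r^\omega=r$ for all $r\in\mathcal{R}\setminus\{0\}$ to the single case $r=r_0$. Here I would argue by propagation: for $r\in\mathcal{R}$ with $r>r_0$, condition~(ii) in the definition of $\Delta_r^\omega$ becomes nonvacuous because $U_{r_0}\subseteq U_{<r}$. The obstruction witnessing $\Delta_{r_0}^\omega=r_0$---namely the failure, for $U_{<r_0}=\emptyset$, of condition~(i) at $d=r_0+1$ in the definition of $\Delta_{r_0}^\omega$---can then be parlayed into a failure of (i) or (ii) at $d=r+1$ for $\Delta_r^\omega$, using the nested containments $K_i(d')\subseteq K_i(d)$ for $d'\leq d$ together with the corresponding containments of composita $M_c(d')\subseteq M_c(d)$. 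This yields $\Delta_r^\omega\leq r$, and combined with the trivial inequality $\Delta_r^\omega\geq r$, the desired equality.

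Part~(2) is handled by the identical recipe with $\Delta_r^\omega$, $f_c^\omega$, $f_{U_0}^\omega$ replaced by $\Delta_r$, $f_c$, $f_{U_0}$, now relying on the local-cyclicity variants of the defining conditions and on the parallel inequality $r\leq f_c\leq f_{U_r}\leq\Delta_r$. The main obstacle in both parts is the propagation step: one must carefully track how the combinatorial data underpinning $\Delta_r^\omega$ and $\Delta_r$ transport from the critical minimal level $r_0$ to higher $r$, since the index sets $U_{>r}$, $U_{<r}$ and the fields $M_{U_r}(d)$ all vary with $r$, and one needs to extract the obstruction at $r$ from the obstruction at $r_0$ in a uniform way.
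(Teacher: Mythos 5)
Your skeleton (apply Theorem~\ref{thm:reformulate}, use the inequality chains, collapse everything to the minimal positive level $r_0$ and to $U_0$) is the same as the paper's, but the two places where you wave your hands are exactly where the paper has to invoke nontrivial input from Lee, and your sketch of the first of them is factually wrong. The reduction ``$\Delta^\omega_{r_0}=r_0\Rightarrow \Delta^\omega_r=r$ for all $r\in\mathcal{R}\setminus\{0\}$'' is obtained in the paper by citing Lee's inequality $\Delta^\omega_{r_0}-r_0\ge \Delta^\omega_r-r$ (\cite[Proposition 4.3]{lee2022tate}); your replacement ``propagation'' argument does not work as described. In the setting of the proposition $U_{<r_0}$ is \emph{not} empty: it contains $U_0$, which must be nonempty for $f^\omega_{U_0}$ to appear in the statement. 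Moreover the obstruction at level $r_0$ can never be condition (i) at $d=r_0+1$, because $M_{U_{>r_0}}(r_0+1)=K_0(r_0+1)$ makes (i) automatic (the paper records this right after the proposition); the only possible obstruction is condition (ii), which involves the fields $K_i$ for $i\in U_{<r_0}$, and transporting that failure from the data $(M_{U_{r_0}}(d),\,U_{<r_0})$ to the data $(M_{U_r}(d),\,U_{<r})$ at a higher level $r$ is precisely the content of Lee's Proposition 4.3, not a routine consequence of the nesting $K_i(d')\subseteq K_i(d)$. You also implicitly assume that failure of the conditions at $d=r+1$ forces failure at every larger $d$ (so that the ``largest $d$'' is at most $r$), which is a further unproved monotonicity claim.

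For part (2) the claim that the chain $r\le f_c\le f_{U_r}\le \Delta_r$ is a ``parallel inequality'' is a genuine gap rather than a notational variant: unlike the algebraic chain, it is not immediate from the definitions, because local cyclicity passes to subfields but not to composita. One needs the containments $M_{U_r}(f+L(U_r)-r)\subseteq M_c(f+L(c)-r)$ and $K_0(f)M_{U_{>r}}(f),\,K_0(f)M_{U_r}(f)\subseteq M_{U_r}(f+L(U_r)-r)$, which the paper establishes via Lee's identity $M_c(f+L(c)-r)=K_0(f)K_i(f+L(c)-r)$ for $r\le f\le f^\omega_c$ (\cite[Proposition 5.8]{lee2022tate}); this is the bulk of the paper's proof, and the reduction to $r_0$ in the non-algebraic case additionally requires \cite[Proposition 4.10]{lee2022tate} in place of Proposition 4.3. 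So your proposal is the right outline, but both of its load-bearing steps are either missing or, as sketched, incorrect.
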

\begin{proof}
By Theorem~\ref{thm:reformulate}, $\Sha_{\omega}^{2}(k,\widehat{T}_{L/k})=0$ if an only if $\Delta^\omega_r=r$ for all $r\in \calR\setminus\{0\}$ and $f^\omega_c=r$ for all admissible subsets $c$ of support $r$. Since $r\le f^\omega_c \le \Delta^\omega_r$, the first condition $\Delta^\omega_r=r$ implies that $f^\omega_c=r$ for all admissible subsets $c$ of support $r\ge 1$. Also one has $0\le {f^\omega_c} \le f^\omega_{U_0}$ if $c\subset U_0$, so that the above condition is equivalent to that $\Delta^\omega_r=r$ for all $r\in \calR\setminus\{0\}$ and $f^\omega_{U_0}=0$. By \cite[Proposition 4.3]{lee2022tate}, we have 
$\Delta^\omega_{r_0}-r_0 \ge \Delta^\omega_{r}-r$. This proves the first statement. 

We now show $r\le f_c \le f_{U_r} \le \Delta_r$. By \cite[Proposition 5.8]{lee2022tate}, if $r\le f \le f^\omega_c$ and $i\in c$, then
\begin{align*}
    M_c(f+L(c)-r)=K_0(f) K_i(f+L(c)-r).
\end{align*} 
Since $f\le f^\omega_{U_r}$, one also has
\begin{align*}
    M_{U_r}(f+L(U_r)-r)=K_0(f) K_i(f+L(U_r)-r).
\end{align*}
Therefore, $M_{U_r}(f+L(U_r)-r)\subset M_c(f+L(c)-r)$. Hence if $M_c(f+L(c)-r)$ is locally cyclic then $M_{U_r}(f+L(U_r)-r)$ is locally cyclic. It follows that $f_c\le f_{U_r}$. 

For $r\le f\le f^\omega_{U_r} \le \Delta^\omega_r$ and $i\in U_r$, one has 
\[ K_0(f)K_i(f) \subset K_0(f) K_i(f+L(U_r)-r)=M_{U_r}(f+L(U_r)-r)\]
and hence $K_0(f)K_{U_r}(f)\subset M_{U_r}(f+L(U_r)-r)$. Since $f\le \Delta^\omega_r$, one also has
\[ K_0(f)K_{U_{>r}}(f) \subset \bigcap_{i\in U_r} K_0(f)K_i(f)\subset K_0(f)K_i(f) \subset M_{U_r}(f+L(U_r)-r). \] 
Therefore, if $M_{U_r}(f+L(U_r)-r)$ is locally cyclic, then both $K_0(f)K_{U_{>r}}(f)$ and $K_0(f)K_{U_r}(f)$ are locally cyclic. It follows that $f_{U_r}\le \Delta_r$. This shows $r\le f_c \le f_{U_r} \le \Delta_r$. 

The second statement then follows from the same argument and \cite[Proposition 4.10]{lee2022tate}. \qed
\end{proof}

We can look further the conditions for nonvanishing of the groups $\Sha_{\omega}^{2}(k,\widehat{T}_{L/k})$ and $\Sha^{2}(k,\widehat{T}_{L/k})$.
Note that if $U_{>r}\neq \emptyset$, then $M_{U_{>r}}(r+1)=K_0(r+1)$ and hence the condition
\begin{align*}
    M_{U_{>r}}(r+1) \subset \bigcap_{i\in U_r} K_0(r+1) K_i(r+1)
\end{align*}
always holds. Thus, we have
\begin{equation}
    \Delta^\omega_{r_0}\ge r_0+1 \iff  M_{U_{r_0}}(r_0+1)\subset \bigcap_{i\in U_{< r_0}} K_0(r_0+1) K_i(r_0+1).
\end{equation}
By definition, we have $\Delta_{r_0}\ge r_0+1$ if and only if the
following three conditions hold 
\begin{itemize}
    \item [(i)] $\Delta^\omega_{r_0}\ge r_0+1$;
    \item[(ii)] $K_{0}(r_0+1)M_{U_{r_0}}(r_0+1)$ is locally cyclic;
    \item[(iii)] If $U_{>r_0}$ is nonempty, then $K_{0}(r_0+1)M_{U_{>r_0}}(r_0+1)$ is locally cyclic.
\end{itemize}
Similarly, we have 
\begin{equation} \label{eq:2.6}
   f^\omega_{U_0}\ge 1 \iff
   \begin{array}{l}
        \text{$M_{U_0}(1+L(U_0))$ is a subfield of a bicyclic extension of $k$} \\
        \text{and it contains $K_0(1)$,}
   \end{array}
\end{equation}
and 
\begin{equation}\label{eq:2.7}
   f_{U_0}\ge 1 \iff \text{$f^\omega_{U_0}\ge 1$ and $M_{U_0}(1+L(U_0))$ is locally cyclic.}
\end{equation}

In the special case where $K_i$ are distinct cyclic extensions of degree $p$ over $k$, one obtains from \eqref{eq:2.7} that $\Sha(L)\neq 0$ if and only if (i) $K_0\subset M_{U_0}(1)$, (ii) $M_{U_0}(1)$ is a subfield of a bicyclic extension, and (iii) $M_{U_0}(1)$ is locally cyclic. This is the same as Proposition~\ref{prop:BLP.3} in this special case.

\section{Remarks on the conditions for Theorem~\ref{thm:Lee}}
\label{sec:R}

\subsection{}
Note that the assumption $e_1 \geq e_2 \geq \dotsb \geq e_m$ is unnecessary. We can choose some permutation $\sigma \in S_m$ such that $e_{\sigma(i)} \geq e_{\sigma(i+1)}$. The invariants $e_{i, j}$ are identical up to $\sigma$. From the definition of $\ell$-equivalence, $U_r$, $\epsilon_i$, (algebraic) patching degrees $\Delta^{(\omega)}_r$, (algebraic) degrees of freedom $f^{\omega}_c$, etc., we see that they are identical after the action of $\sigma$. Therefore the Tate-Shafarevich groups $\Sha(L)$ and $\Sha_{\omega}(L)$ given by the formula without the assumption are the same as those given by the formula with the assumption. Thus, for implementing an algorithm, we do not need to rearrange of our input data so that this assumption holds. 

\subsection{}
In this subsection, we discuss whether we have the same results without the condition $\cap_{i = 0}^m K_i = k$. That is, setting $F = \cap_{i = 0}^m K_i$ and considering $L/F$ as an étale $F$-algebra, we compare the groups $\Sha^2(k, \hat{T}_{L/k})$ and $\Sha^2(F, \hat{T}_{L/F})$.

First we denote 
\[ T^L_F = R_{L/F} \mathbb{G}_{m, L}, \quad T^L := R_{L/k} \mathbb{G}_{m, L}=R_{F/k} T^L_F, \quad T^F = R_{F/k}\mathbb{G}_{m, F}, \]
and let $T_{L/F} = \Ker N_{L/F}$ and $T_{L/k} = \Ker N_{L/k}$, where $N_{L/F} = \prod_{i = 0}^m N_{K_i/F}$ and $N_{L/k} = \prod_{i = 0}^m N_{K_i/k}$ are the norm maps. Let $\Tilde{k}=K_0 K_1 \dotsb K_m$ be the composition of $K_i$, and set 
\[ G = \Gal(\Tilde{k}/k), \quad H_i = \Gal(\Tilde{k}/K_i), \quad \text{and} \quad H = \Gal(\Tilde{k}/F). \]

\begin{lemma}\label{lm:3.1}
\begin{enumerate}
    \item We have $H^1(F,\wh T_{L/F})=H^1(H,\wh T_{L/F})=0$.
    \item We have $H^1(F_w,\wh T_{L^w/F_w})=H^1(H_w,\wh T_{L/F})=0$, where $w$ is a place of $F$, $H_w$ is the decomposition group, and $L^w=L\otimes F_w$.  
\end{enumerate}
   
\end{lemma}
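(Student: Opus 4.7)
The plan is to compute both $H^1$'s via the defining character-group short exact sequence of the multinorm one torus and Shapiro's lemma. Dualising $1 \to T_{L/F} \to R_{L/F}\G_m \to \G_m \to 1$ yields
\[
0 \to \Z \to \bigoplus_{i=0}^m \Z[\Gamma_F/\Gamma_{K_i}] \to \wh T_{L/F} \to 0,
\]
whose Galois action factors through $H = \Gal(\tilde k/F)$. By Shapiro, $H^j(F, \Z[\Gamma_F/\Gamma_{K_i}]) \cong H^j(K_i, \Z)$ and $H^j(H, \Z[H/H_i]) \cong H^j(H_i, \Z)$.

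For part (1), the vanishing $H^1(K_i, \Z) = 0$ together with the long exact sequence reduces the computation to
\[
H^1(F, \wh T_{L/F}) = \ker\Bigl(H^2(F, \Z) \to \bigoplus_i H^2(K_i, \Z)\Bigr).
\]
Via the Bockstein isomorphism $H^2(-, \Z) \cong \Hom_{\rm cts}(\Gamma_{-}, \Q/\Z)$, a character $\chi$ in this kernel cuts out a finite cyclic extension of $F$ contained in every $K_i$, hence in $\bigcap K_i = F$, so $\chi = 0$. The identical argument with $H$ in place of $\Gamma_F$ yields $H^1(H, \wh T_{L/F}) = 0$, translating $\bigcap K_i = F$ as $H = \langle H_0, \ldots, H_m\rangle$. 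The equality of the two cohomology groups is inflation--restriction, valid because $\wh T_{L/F}$ is $\Z$-free with trivial $\Gamma_{\tilde k}$-action, forcing $H^1(\Gamma_{\tilde k}, \wh T_{L/F}) = 0$.

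For part (2) I mirror the scheme locally, with $F_w$ and $H_w = \Gal(\tilde k_v / F_w)$ at a fixed $v \mid w$ replacing $F$ and $H$. Inflation--restriction again gives the equality $H^1(F_w, \wh T_{L^w/F_w}) = H^1(H_w, \wh T_{L/F})$. After applying Mackey's formula
\[
\Z[H/H_i]\big|_{H_w} \cong \bigoplus_{g \in H_w\backslash H / H_i} \Z[H_w/(H_w \cap gH_ig^{-1})],
\]
the analog of the kernel in (1) becomes the group of characters of $H_w$ vanishing on every subgroup $H_w \cap gH_ig^{-1}$; these subgroups are precisely the decomposition groups (inside $H_w$) of the local completions $K_{i, v_i}$ of the $K_i$ at the primes $v_i \mid w$.

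The hardest step, and the one I would scrutinise most carefully, is this final local vanishing: it asks for $H_w = \langle H_w \cap gH_ig^{-1}\rangle$, equivalently $\bigcap_{i, v_i\mid w} K_{i, v_i} = F_w$ inside $\bar F_w$. Unlike the global identity $H = \langle H_i\rangle$, which is a direct reformulation of $\bigcap K_i = F$, this does not follow formally because distinct global fields can share the same completion at $w$. I would try to bridge this gap via an orbit--stabiliser analysis on $H_w \curvearrowright H/H_i$ (whose orbits index the primes $v_i \mid w$), combined with the structural input available in the ambient setting (namely that each $K_i/k$ is cyclic of $p$-power degree and the composite $\tilde k/F$ is tightly controlled), and I would expect that either the statement goes through after this refinement or an additional local hypothesis has to be inserted in the lemma.
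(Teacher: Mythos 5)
Your part (1) is correct and is essentially the paper's own proof: dualizing $1\to T_{L/F}\to T^L_F\to \mathbb{G}_{m,F}\to 1$ gives $0\to\Z\to\bigoplus_{i=0}^m\Ind_{H_i}^H\Z\to\wh T_{L/F}\to 0$, Shapiro's lemma kills the $H^1$ of the middle term, and $H^1(H,\wh T_{L/F})$ becomes the group of characters of $H$ vanishing on every $H_i$, which is trivial because $\bigcap_i K_i=F$ translates into $H=H_0\cdots H_m$. Your inflation--restriction justification of $H^1(F,\wh T_{L/F})=H^1(H,\wh T_{L/F})$ is what the paper phrases as independence of the splitting field.

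For part (2), the step you single out as the one to scrutinise is exactly where the paper's proof is deficient: after the identification $H^1(F_w,\wh T_{L^w/F_w})\simeq H^1(H_w,\wh T_{L/F})$, the paper says only ``by the same argument as (1)'', which tacitly asserts the local identity $H_w=\langle H_w\cap H_i\rangle_i$ (here $H$ is abelian since each $K_i/k$ is cyclic, so $gH_ig^{-1}=H_i$ and your Mackey decomposition is just copies of $\Z[H_w/(H_w\cap H_i)]$). Your worry is justified: this identity does not follow from $H=\langle H_i\rangle_i$, and the statement of part (2) is in fact false without a further hypothesis. Concretely, take $m=1$ and $K_0,K_1$ linearly disjoint cyclic extensions of degree $p$ with $K_0\cap K_1=F$, so that $H\cong\Z/p\Z\times\Z/p\Z$ with $H_0,H_1$ the two coordinate subgroups, and choose by Chebotarev a place $w$ of $F$ unramified in $K_0K_1$ whose Frobenius is $(1,1)$. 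Then $H_w=\langle(1,1)\rangle$ meets $H_0$ and $H_1$ trivially, so the kernel you wrote down is all of $\Hom(H_w,\Q/\Z)\cong\Z/p\Z$. One can check this without any group cohomology: here $L^w\cong E\times E$ with $E/F_w$ unramified of degree $p$, so $H^1(F_w,\wh T_{L^w/F_w})=\Ker\bigl(\Hom(\Gamma_{F_w},\Q/\Z)\to\Hom(\Gamma_{E},\Q/\Z)^{2}\bigr)\cong\Hom(\Gal(E/F_w),\Q/\Z)\cong\Z/p\Z$, consistent with $F_w^\times/N_{L^w/F_w}((L^w)^\times)\cong\Z/p\Z$ under local duality. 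So no completion of your argument can exist in this generality; the correct conclusion is the one you hedge toward at the end, namely that a local hypothesis must be added (that every decomposition group $H_w$ is generated by the subgroups $H_w\cap H_i$, equivalently that $\bigcap_{i,\,v_i\mid w}K_{i,v_i}=F_w$ for every $w$), and both the lemma as stated and the paper's one-line proof of part (2) need to be repaired before they are used downstream.
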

\begin{proof}
\begin{enumerate}
    \item Since the group $H^1(H,\wh T_{L/F})$ is independent of the choice of the splitting field, we have $H^1(H,\wh T_{L/F})=H^1(F,\wh T_{L/F})$. From the exact sequence
    $ 1\rightarrow T_{L/F} \rightarrow T^{L}_F \rightarrow\mathbb{G}_{m,F} \rightarrow 1$, we obtain the exact sequence  
 $$ 0\longrightarrow \mathbb{Z}\longrightarrow \widehat{T^L_F}=\bigoplus_{i=0}^m \Ind_{H_i}^H \Z 
\longrightarrow\widehat T_{L/F}\longrightarrow 0$$   
and hence the long exact sequence 
\begin{equation}\label{eq:H1T_hat}
H^1(H,\widehat{T^L_F})\longrightarrow H^1(H,\widehat T_{L/F})\longrightarrow H^2(H,\mathbb{Z})\longrightarrow H^2(H,\widehat{T^L}).
\end{equation}
It follows from Shapiro's Lemma that $H^1(H,\widehat{T^L})=\oplus H^1(H_i, \Z)=0$. Using the canonical isomorphism $H^2(H,\Z)\simeq H^1(H,\Q/\Z)=\Hom(H,\Q/\Z)$
we get from \eqref{eq:H1T_hat} that 
$$H^1(H,\widehat T_{L/F})\simeq \Ker\left (\Hom(H,\mathbb{Q/Z})\to  \bigoplus_{i=0}^m \Hom(H_i,\mathbb{Q/Z})\right ). $$
Since $\cap_i K_i=F$, one has $H=H_0\cdots H_m$ and hence  $H^1(H,\widehat T_{L/F})=0$.
\item By the construction, we have $T_{L/F}\otimes_F F_v=T_{L^w/F_v}$. Thus, the $\Gamma_{F_v}$-module $\wh T_{L^w/F_w}$ is equal to $\wh T_{L/F}$ when viewed as a $\Gamma_{F_v}$ by an inclusion $\Gamma_{F_v}\embed \Gamma_{F}$. As its first Galois cohomology is independent of the choice of a splitting field, one gets
\[ H^1(F_w, \wh T_{L^w/F_w})\simeq H^1(H_w, \wh T_{L^w/F_w})\simeq H^1(H_w, \wh T_{L/F}).\]
By the same argument as (1), one obtains 
$H^1(F_w, \wh T_{L^w/F_w})=0$. This completes the proof of the lemma. \qed
\end{enumerate}
 
\end{proof}

\begin{lemma}\label{lm:Sha2hatT}
   Let $T$ be an algebraic torus over $k$ and $K/k$ a Galois splitting field for $T$ with Galois group $G$.
   \begin{enumerate}
       \item There is a natural isomorphism $\Sha^2(G, \wh T)\isoto \Sha^2(k, \wh T)$. 
       \item There is a natural isomorphism $\Sha^2_{\omega}(G, \wh T)\isoto \Sha^2_{\omega}(k, \wh T)$.
   \end{enumerate}
\end{lemma}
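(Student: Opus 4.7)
The plan is to reduce the lemma to the vanishing of the relevant Sha groups after base change to the splitting field $K$, combined with the Hochschild-Serre inflation-restriction sequence. Since $K$ splits $T$, the character module $\wh T$ is a trivial $\Gamma_K$-module, isomorphic as an abelian group to $\Z^n$ for some $n$. Consequently, $H^1(K, \wh T) = \Hom_{\mathrm{cts}}(\Gamma_K, \Z^n) = 0$, because $\Gamma_K$ is profinite and $\Z^n$ is torsion-free and discrete. Applied to the exact sequence $1 \to \Gamma_K \to \Gamma_k \to G \to 1$, the five-term inflation-restriction exact sequence then yields
\[ 0 \longrightarrow H^2(G, \wh T) \xrightarrow{\Inf} H^2(k, \wh T) \xrightarrow{\mathrm{res}} H^2(K, \wh T). \]
Exactly the same argument, applied to each place $v$ of $k$ with a chosen extension $w$ to $K$ and decomposition group $G_w = \Gal(K_w/k_v) \subset G$, gives
\[ 0 \longrightarrow H^2(G_w, \wh T) \xrightarrow{\Inf} H^2(k_v, \wh T) \xrightarrow{\mathrm{res}} H^2(K_w, \wh T), \]
compatible with the global sequence via the evident restriction maps.

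Next I would establish that $\Sha^2(K, \wh T) = 0$ and $\Sha^2_\omega(K, \wh T) = 0$. Since $\wh T$ is $\Gamma_K$-trivial, it suffices to treat the case $\wh T = \Z$. The boundary of $0 \to \Z \to \Q \to \Q/\Z \to 0$ yields $H^2(K, \Z) \cong H^1(K, \Q/\Z) = \Hom_{\mathrm{cts}}(\Gamma_K, \Q/\Z)$, and a continuous character $\chi$ corresponds to a finite cyclic extension $M/K$. The image of $\chi$ in $H^1(K_w, \Q/\Z)$ vanishes if and only if $w$ splits completely in $M$. If $M \neq K$, Chebotarev's density theorem implies that the set of places of $K$ not splitting completely in $M$ has positive Dirichlet density, in particular is infinite; thus $\chi \notin \Sha^2_\omega$. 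This forces $\chi = 0$ and proves both vanishing statements.

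Given $\xi \in \Sha^2(k, \wh T)$ (respectively $\Sha^2_\omega$), the restriction $\mathrm{res}(\xi) \in H^2(K, \wh T)$ maps into each $H^2(K_w, \wh T)$ factoring through $\xi|_{k_v} = 0$ (for all $v$, resp. for all $v$ outside a finite set $S$, and hence for all $w$ outside the finite set of places of $K$ above $S$). Therefore $\mathrm{res}(\xi) \in \Sha^2(K, \wh T) = 0$ (resp. $\Sha^2_\omega(K, \wh T) = 0$), and by the first exact sequence $\xi = \Inf(\eta)$ for a unique $\eta \in H^2(G, \wh T)$. Commutativity with the local inflations together with the injectivity of $\Inf \colon H^2(G_w, \wh T) \hookrightarrow H^2(k_v, \wh T)$ then forces $\mathrm{res}^G_{G_w}(\eta) = 0$ exactly when $\xi|_{k_v} = 0$, so $\eta$ lies in $\Sha^2(G, \wh T)$ (resp. $\Sha^2_\omega(G, \wh T)$), and conversely every such $\eta$ inflates into the corresponding Sha group over $k$. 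The main subtlety is the vanishing $\Sha^2_\omega(K, \wh T) = 0$, which is exactly where Chebotarev's density theorem is needed; the rest is routine functoriality of the Hochschild-Serre machinery.
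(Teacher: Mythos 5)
Your argument is correct, and it takes a genuinely different (and more self-contained) route than the paper. The paper disposes of part (1) by citation: the independence of $\Sha^1(G,T)$ from the choice of splitting field (Ono) combined with Poitou--Tate duality; for part (2) it compares two \emph{finite} splitting fields $K\subset K'$, uses the same Hochschild--Serre mechanism you use (injectivity of inflation $H^2(G,\wh T)\hookrightarrow H^2(G',\wh T)$ because $H^1(\Gal(K'/K),\wh T)=0$ for the torsion-free trivial module), and reduces everything to the purely group-theoretic triviality that a homomorphism $\Gal(K'/K)\to\Q/\Z$ vanishing on all cyclic subgroups is zero; the arithmetic input (Chebotarev) is hidden in the identification of ``almost all decomposition groups'' with ``all cyclic subgroups''. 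You instead work directly with $\Gamma_k$ and $\Gamma_K$, observe $H^1(K,\wh T)=\Hom_{\mathrm{cts}}(\Gamma_K,\Z^n)=0$ to get the exact sequences $0\to H^2(G,\wh T)\to H^2(k,\wh T)\to H^2(K,\wh T)$ globally and locally, and then supply the key vanishing $\Sha^2_\omega(K,\wh T)=0$ (hence also $\Sha^2(K,\wh T)=0$) by an explicit Chebotarev argument on characters of $\Gamma_K$; the diagram chase with the injective local inflations then identifies $\Sha^2_{(\omega)}(G,\wh T)$ with $\Sha^2_{(\omega)}(k,\wh T)$. What your route buys: a uniform treatment of both statements that avoids Poitou--Tate duality and Ono's theorem entirely, and it makes the role of Chebotarev explicit (note that in the $\omega$-case you correctly only need vanishing at the finitely many places of $K$ above the exceptional set in $k$ to be violated at infinitely many places, which your density argument gives). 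What the paper's route buys: part (1) for free from standard references, and a part (2) whose only computation is a one-line statement about finite abelian character groups. One cosmetic caveat: your phrase ``$\mathrm{res}^G_{G_w}(\eta)=0$ exactly when $\xi|_{k_v}=0$'' is the right statement, but it is worth noting it rests on the injectivity of the local inflation $H^2(G_w,\wh T)\to H^2(k_v,\wh T)$, which you did establish via $H^1(K_w,\wh T)=0$; with that in place the argument is complete.
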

\begin{proof}
These are well-known results. The statement (1) follows from the fact that the group $\Sha^1(G, T)$ is independent of the choice of the splitting field $K$;  see \cite[Sections 3.3 and 3.4]{ono:tamagawa} and the Poitou-Tate duality (\cite[Theorem 6.10]{platonov-rapinchuk} and \cite[Theorem 8.6.8]{NSW}). We give a proof of (2) for the reader's convenience. Let $K'$ be another Galois splitting field for $T$ containing $K$ with Galois groups $G'=\Gal(K'/k)$ and $H=\Gal(K'/K)$. Since $\wh T$ is a trivial $H$-module, $H^1(H, \wh T)=\Hom(H, \wh T)=0$. By  
Hochschild-Serre's spectral sequence, we have the exact sequence 
\[ 0\longrightarrow H^2(G, \wh T) \longrightarrow H^2(G', \wh T) \longrightarrow H^2(H, \wh T). \]
Thus, to show $\Sha^2_\omega(G, \wh T) \isoto \Sha^2_\omega(G', \wh T)$, it suffices to show $\Sha^2_\omega(H, \wh T)=0$. Since $\wh T$ is a trivial $H$-module, it is the same to show $\Sha^2_\omega(H, \Z)=0$. As $H^2(H,\Z)\simeq H^1(H, \Q/\Z)$, this follows from that 
\[ \Ker \left( \Hom(H, \Q/\Z) \longrightarrow \prod_{C}\Hom(C,\Q/\Z) \right )=0, \] 
where $C$ runs through all cyclic subgroups of $H$. \qed
\end{proof}

\begin{prop} \label{prop:TLk_TLF}
   There is a natural injective map $\wh \iota: \Sha^2_\omega(k, \hat{T}_{L/k})  \longrightarrow  \Sha^2_\omega(F, \hat{T}_{L/F})$. 
\end{prop}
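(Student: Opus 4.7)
The plan is to build a natural short exact sequence of $k$-tori
\begin{equation*}
1\to R_{F/k}T_{L/F}\to T_{L/k}\to T_{F/k}\to 1
\end{equation*}
and reduce the injectivity of $\wh\iota$ to the vanishing $\Sha^2_\omega(k,\wh T_{F/k})=0$. The latter will hold because $F=\bigcap_i K_i\subseteq K_0$ and $K_0/k$ is cyclic, so $F/k$ is itself cyclic.

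First, I would apply the exact functor $R_{F/k}$ to the defining sequence $1\to T_{L/F}\to R_{L/F}\G_m\to \G_{m,F}\to 1$ and combine it with the factorization $N_{L/k}=N_{F/k}\circ R_{F/k}(N_{L/F})$ through a snake-lemma diagram to extract the displayed exact sequence; surjectivity of $T_{L/k}\to T_{F/k}$ is immediate from that of $R_{F/k}(N_{L/F})$. Dualizing yields the exact sequence of $\Gamma_k$-modules
\begin{equation*}
0\to\wh T_{F/k}\to\wh T_{L/k}\to\wh{R_{F/k}T_{L/F}}\to 0,
\end{equation*}
and Shapiro's lemma identifies $H^i(k,\wh{R_{F/k}T_{L/F}})$ with $H^i(F,\wh T_{L/F})$. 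Lemma~\ref{lm:3.1}(1) then kills $H^1(F,\wh T_{L/F})$, so the long exact sequence produces a left-exact sequence
\begin{equation*}
0\to H^2(k,\wh T_{F/k})\to H^2(k,\wh T_{L/k})\xrightarrow{j} H^2(F,\wh T_{L/F}).
\end{equation*}
Lemma~\ref{lm:3.1}(2) gives the analogous local left-exactness at every $v\in\Omega_k$, with $\prod_{w\mid v}H^2(F_w,\wh T_{L/F})$ on the right, and the global-to-local restrictions commute with $j$. This lets me restrict $j$ to a natural map $\wh\iota:\Sha^2_\omega(k,\wh T_{L/k})\to\Sha^2_\omega(F,\wh T_{L/F})$; moreover, local left-exactness forces every class of $\ker\wh\iota$ to lift to some $[C']\in\Sha^2_\omega(k,\wh T_{F/k})$.

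It therefore suffices to show $\Sha^2_\omega(k,\wh T_{F/k})=0$. Since $F/k$ is a subextension of the cyclic extension $K_0/k$, it is itself cyclic; write $G'=\Gal(F/k)$. Using the resolution $0\to\Z\to\Z[G']\to\wh T_{F/k}\to 0$ together with Shapiro's vanishing $H^i(G',\Z[G'])=0$ for $i\ge 1$, the long exact sequence yields $H^2(G',\wh T_{F/k})\simeq H^3(G',\Z)=0$, the last equality being the $2$-periodicity of Tate cohomology for cyclic groups. Lemma~\ref{lm:Sha2hatT}(2) transfers this vanishing to $\Sha^2_\omega(k,\wh T_{F/k})=0$, which proves the injectivity. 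The main (mild) obstacle of the argument is the bookkeeping in the snake-lemma construction and the compatibility of $j$ with global-to-local restrictions; once those are in place, injectivity drops out formally from the cyclic-group cohomology vanishing.
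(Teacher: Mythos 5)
Your proposal is correct and follows essentially the same route as the paper: the same exact sequence $1 \to R_{F/k}T_{L/F} \to T_{L/k} \to T_{F/k} \to 1$, the same dualization and use of Lemma~\ref{lm:3.1} to get global and local injectivity in degree $2$, and the same lifting of a class in $\Ker\wh\iota$ to an element of $\Sha^2_\omega(k,\wh T_{F/k})$. The only deviation is that you prove the vanishing $\Sha^2_\omega(k,\wh T_{F/k})=0$ directly (indeed the stronger $H^2(\Gal(F/k),\wh T_{F/k})\simeq H^3(\Gal(F/k),\Z)=0$ for the cyclic extension $F/k$), where the paper instead cites \cite[Proposition 2.2]{lee2022tate}; this is a valid, self-contained substitute.
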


\begin{proof}
 From the commutative diagram
\begin{center}
    \begin{tikzcd}
        1 \arrow[r] & R_{F/k}(T_{L/F}) \arrow[r] \arrow[d, hook] & T^L \arrow[r, "N_{L/F}"] \arrow[d, equal] & T^F \arrow[r] \arrow[d, "N_{F/k}"] & 1 \\
        1 \arrow[r] & T_{L/k} \arrow[r] & T^L \arrow[r, "N_{L/k}"] & \Gmk \arrow[r] & 1
    \end{tikzcd}
\end{center}
where the two rows are exact, we obtain the following commutative diagram
\begin{center}
    \begin{tikzcd}
        1 \arrow[r] & R_{F/k} T_{L/F} \arrow[r, "\iota"] \arrow[d, equal] & T_{L/k} \arrow[r, "N_{L/F}"] \arrow[d, hook] & T_{F/k} \arrow[r] \arrow[d, hook] & 1 \\
        1 \arrow[r] & R_{F/k} T_{L/F} \arrow[r, "\iota"] & T^L \arrow[r, "N_{L/F}"] \arrow[d, "N_{L/k}"] & T^F \arrow[r] \arrow[d, "N_{F/k}"] & 1 \\
        & & \Gmk \arrow[r, equal] \arrow[d] & \Gmk \arrow[d] & \\
        & & 1 & 1 &
    \end{tikzcd}
\end{center}
whose rows and columns are exact. Taking the dual of the first row  yields an exact sequence 
\begin{equation}\label{eq:3.1}
    \begin{tikzcd}
       0 \arrow[r] & \wh T_{F/k} \arrow[r, "\wh N_{L/F}" ]  & \wh T_{L/k}  \arrow[r, "\wh \iota"]  & \Ind_H^G \wh T_{L/F}  \arrow[r] &  0.
    \end{tikzcd}
\end{equation}
This gives the following commutative diagram
\begin{equation}\label{eq:3.2}
    \begin{tikzcd}
       H^1(H, \wh T_{L/F}) \arrow[r] \arrow[d] &  H^2(G,\wh T_{F/k}) \arrow[r, "\wh N_{L/F}" ] \arrow[d, "r_{F/k}"] & H^2(G, \wh T_{L/k})  \arrow[r, "\wh \iota"] \arrow[d, "r_{L/k}"] & H^2(H, \wh T_{L/F}) \arrow[d, "r_{L/F}"] \\
       \prod_{w|v} H^1(H_w, \wh T_{L/F}) \arrow[r] & H^2(G_v,\wh T_{F/k}) \arrow[r, "\wh N_{L/F,v}" ]  & H^2(G_v, \wh T_{L/k})  \arrow[r, "\wh \iota_v"]  & \prod_{w|v} H^2(H_w, \wh T_{L/F})      
    \end{tikzcd}
\end{equation}
for every decomposition group $G_v$ of $G$, where $w$ runs through places of $F$ over $v$. By Lemma~\ref{lm:3.1}, we have $H^1(H,\wh T_{L/F})=0$ and $H^1(H_w,\wh T_{L/F})=0$, 
so the maps $\wh N_{L/F}$ and $\wh N_{L/F,v}$ are injective. 
Suppose an element $x\in H^2(G,\wh T_{L/k})$ lies in $\Ker \wh \iota$ satisfying $r_{L/k}(x)=0$. Let $y\in H^2(G,\wh T_{F/k})$ be the unqiue element with $\wh N_{L/F}(y)=x$. Then $r_{F/k}(y)=0$ as the map $\wh N_{L/F,v}$ is injective. It follows that $\Sha_\omega^2(G,\wh T_{L/k})\cap \Ker \wh \iota\simeq \Sha^2_\omega (G,\wh T_{F/k})$, which is zero from \cite[Proposition 2.2]{lee2022tate} as $F/k$ is cyclic.
Thus, the map
\begin{align*}
    \wh \iota: \Sha^2_\omega(k, \wh{T}_{L/k}) = \Sha^2_\omega(G, \wh{T}_{L/k}) \hookrightarrow \Sha^2_\omega(H, \wh{T}_{L/F}) = \Sha^2_\omega(F, \wh{T}_{L/F})
\end{align*}
is injective. \qed   
\end{proof}

\begin{cor}
    Notation being as above, if $\Sha^2_{(\omega)}(F, \wh{T}_{L/F})=0$, then $\Sha^2_{(\omega)}(F, \wh{T}_{L/k})=0$.
\end{cor}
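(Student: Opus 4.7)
The plan is to derive the corollary as an essentially immediate consequence of Proposition~\ref{prop:TLk_TLF}, together with a parallel injection at the level of $\Sha^2$. For the $\omega$-version, the implication is already on a plate: Proposition~\ref{prop:TLk_TLF} gives an injection $\wh\iota \colon \Sha^2_\omega(k,\wh T_{L/k}) \hookrightarrow \Sha^2_\omega(F,\wh T_{L/F})$, so if the target vanishes then so does the source.

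For the non-$\omega$ version, I would first establish the analogue of Proposition~\ref{prop:TLk_TLF}, namely an injection $\wh\iota \colon \Sha^2(k,\wh T_{L/k}) \hookrightarrow \Sha^2(F,\wh T_{L/F})$. The argument is structurally identical: the commutative diagram \eqref{eq:3.2} is valid at each individual place $v$ of $k$, not merely almost everywhere; Lemma~\ref{lm:3.1}(2) gives $H^1(H_w,\wh T_{L/F})=0$ for every place $w$ of $F$, which keeps the local horizontal arrow $\wh N_{L/F,v}$ injective; and the global map $\wh N_{L/F}$ is injective by Lemma~\ref{lm:3.1}(1). Given $x \in \Sha^2(k,\wh T_{L/k}) \cap \Ker\wh\iota$, we chase it through \eqref{eq:3.1} to obtain a unique $y \in H^2(G,\wh T_{F/k})$ with $\wh N_{L/F}(y)=x$, and the local injectivity forces $r_{F/k}(y)=0$ at every place, so $y \in \Sha^2(G,\wh T_{F/k})$. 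This last group vanishes because $F/k$ is cyclic, by \cite[Proposition~2.2]{lee2022tate} (which, as $\Sha^2 \subseteq \Sha^2_\omega$, applies in the non-$\omega$ setting too). Hence $x=0$ and the injection is established.

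With the injection in hand, the corollary is immediate in both cases: if $\Sha^2_{(\omega)}(F,\wh T_{L/F}) = 0$, then its subgroup $\wh\iota(\Sha^2_{(\omega)}(k,\wh T_{L/k}))$ is zero, forcing $\Sha^2_{(\omega)}(k,\wh T_{L/k}) = 0$. The main obstacle is more a matter of verification than substance: one must check that every appeal to Lemma~\ref{lm:3.1} and to the vanishing of $\Sha^2(k,\wh T_{F/k})$ in the proof of Proposition~\ref{prop:TLk_TLF} still goes through when the place index ranges over all of $\Omega_k$, which it does since no ``almost all'' hypothesis was ever used in the local inputs.
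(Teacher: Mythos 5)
Your proposal is correct and follows the paper's intended route: the corollary is presented as an immediate consequence of Proposition~\ref{prop:TLk_TLF}, and your rerunning of the same diagram chase (Lemma~\ref{lm:3.1} at every place, injectivity of $\wh N_{L/F}$ and $\wh N_{L/F,v}$ in \eqref{eq:3.2}, and the vanishing of $\Sha^2(G,\wh T_{F/k})\subseteq \Sha^2_\omega(G,\wh T_{F/k})$ since $F/k$ is cyclic) supplies the non-$\omega$ case exactly as intended. Note only that the group in the conclusion should be read as $\Sha^2_{(\omega)}(k,\wh T_{L/k})$, as you correctly did.
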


\section{Multinorm one tori of Kummer type}\label{sec:K}

\subsection{Kummer extensions}
For a moment, let $k$ be a field which contains a primitive $N$-th root of unity, where $N \ge 2$ is a positive integer prime to the characteristic of $k$. Recall that a Kummer extension $L/k$ of exponent $N$ is a finite abelian field extension, whose Galois group $\Gal(L/k)$ is of exponent $N$, that is, $\sigma^N = 1$ for any $\sigma \in \Gal(L/k)$. For example, if $\char k \neq 2$ then a quadratic extension $L = k(\sqrt{a})$, where $a \in k$ is not a square, is a Kummer extension. Biquadratic extensions and multiquadratic extensions are also Kummer extensions. More generally, for any nonzero element $a \in k$, $k(a^{1/N})$ is a Kummer extension whose degree $m$ divides $N$.

Kummer theory establishes the following one-to-one correspondence
\begin{align} \label{Kummer Theory}
    \left\{ \,\text{Kummer extensions over $k$ of exponent $N$} \, \right\} \longleftrightarrow \left\{ \,\text{finite subgroups of}\, k^{\times}/(k^{\times})^N \, \right\}.
\end{align}
For any finite subgroup $W$ of $k^{\times}/(k^{\times})^N$, we define
\begin{align*}
    K_W \coloneqq k\left( w^{1/N}: w \in W \right)
\end{align*}
and associate $K_W$ to $W$. Conversely, let $L$ be a Kummer extension of $k$. Since $L$ is of exponent $N$, $L$ can be written as a composition of cyclic extensions $k(a_1^{1/N}) \dotsb k(a_m^{1/N})$, where $a_i \in k^{\times}$. We associate it to the subgroup
\begin{align*}
    W_L = \langle \overline{a_i} \mid i = 1, \dotsc, m \rangle
\end{align*}
where $\overline{a_i}$ denotes the image of $a_i$ in $k^{\times}$. 

Let $\mu_N$ be the group of $N$-th roots of unity in $k^{\times}$. There is a perfect pairing 
\begin{align*}
    \Gal(K_W/k) \times W &\longrightarrow \mu_N \\
    (\sigma,w) &\longmapsto \frac{\sigma(w^{1/p^n})}{w^{1/p^n}}.
\end{align*}
This gives a natural identification $\Gal(K_W/k)=\Hom(W,\mu_N)$. If $W_1 \subset W_2$ are two subgroups of $k^\times/(k^\times)^{p^n}$,
the natural projection $\Gal(K_{W_2}/k) \to \Gal(K_{W_1}/k)$ is the restriction to $W_1$:
\begin{align*}
    \Hom(W_2, \mu_N) \longrightarrow \Hom(W_1, \mu_N).
\end{align*}

Inclusion, composition, and intersection of groups $W_i$ correspond to those of Kummer extensions.

\begin{prop}
Let $W$ and $W_i$ ($i=1,2$) be subgroups of $k^{\times}/(k^{\times})^N$ and $K_{W}$ and  $K_{W_i}$ be the corresponding Kummer extensions. Then
\begin{enumerate}
    \item $K_{W_1} \subset K_{W_2}$ if and only if $W_1 \subset W_2$.
    \item $W = W_1 W_2$ if and only if $K_W = K_{W_1} K_{W_2}$.
    \item $W = W_1 \cap W_2$ if and only if $K_{W_1} \cap K_{W_2} = K_W$.
\end{enumerate}
\end{prop}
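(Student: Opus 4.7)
The strategy is to exploit the bijection \eqref{Kummer Theory} of Kummer theory together with the observation that, on both sides, we have natural lattice structures (inclusion, composition/sum, intersection) and the correspondence is inclusion-preserving in both directions. Once (1) is established, statements (2) and (3) follow from the lattice characterization of sum and intersection as least upper bound and greatest lower bound.

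For (1), the implication $W_1 \subset W_2 \Rightarrow K_{W_1}\subset K_{W_2}$ is immediate from the definition, since every generator $w^{1/N}$ of $K_{W_1}$ ($w\in W_1$) already lies in $K_{W_2}$. For the converse I would use the pairing: the Kummer correspondence is a bijection, and under it the subgroup attached to a Kummer extension $L$ is $W_L = (k^\times \cap (L^\times)^N)/(k^\times)^N$. If $K_{W_1}\subset K_{W_2}$, then $W_1 \subset W_{K_{W_1}} \subset W_{K_{W_2}} = W_2$, where the equalities use that $W\mapsto K_W$ and $L\mapsto W_L$ are mutually inverse. Alternatively, one can argue via the identification $\Gal(K_W/k)=\Hom(W,\mu_N)$ displayed above and the fact that the restriction map on Galois groups is surjective, forcing the dual map $\Hom(W_2,\mu_N)\to \Hom(W_1,\mu_N)$ to be the natural restriction and hence $W_1\subset W_2$.

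Granted (1), part (2) reduces to checking that $K_W$ with $W=W_1W_2$ is the compositum. By (1) applied to $W_i \subset W_1W_2$, we have $K_{W_i}\subset K_W$, so $K_{W_1}K_{W_2}\subset K_W$. Conversely, $K_{W_1}K_{W_2}$ contains $w_1^{1/N}w_2^{1/N}$, which is an $N$-th root of $w_1w_2$; as a Kummer extension of exponent $N$ containing $\mu_N$, it therefore corresponds under \eqref{Kummer Theory} to a subgroup containing every $w_1w_2$ with $w_i\in W_i$, hence containing $W_1W_2$. Applying (1) in the opposite direction gives $K_W\subset K_{W_1}K_{W_2}$.

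Part (3) is dual and is the place where one must be a bit careful. The key point is that $K_{W_1}\cap K_{W_2}$ is itself a Kummer extension of $k$ of exponent $N$, since it is a subextension of the Kummer extension $K_{W_1}K_{W_2}$ and $k$ contains $\mu_N$; hence by \eqref{Kummer Theory} it equals $K_W$ for a unique finite subgroup $W\subset k^\times/(k^\times)^N$. By (1), $K_W\subset K_{W_i}$ forces $W\subset W_i$ for $i=1,2$, so $W\subset W_1\cap W_2$. The reverse containment $W_1\cap W_2 \subset W$ follows from (1) applied to $K_{W_1\cap W_2}\subset K_{W_i}$, giving $K_{W_1\cap W_2}\subset K_{W_1}\cap K_{W_2}=K_W$, hence $W_1\cap W_2\subset W$. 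I expect the only nontrivial point to be the assertion that $K_{W_1}\cap K_{W_2}$ is genuinely Kummer of exponent $N$, which rests on the fact that any subextension of an abelian extension of exponent $N$ is again abelian of exponent dividing $N$, together with $\mu_N\subset k$ to ensure that it lies in the image of the Kummer correspondence.
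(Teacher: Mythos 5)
Your proposal is correct. Note that the paper itself gives no proof of this proposition: it is stated as a standard consequence of the Kummer correspondence \eqref{Kummer Theory} and the duality $\Gal(K_W/k)=\Hom(W,\mu_N)$ set up just before it. Your argument supplies precisely the standard justification: the forward implication in (1) is immediate from $K_W=k(w^{1/N}:w\in W)$, the converse uses that the inverse of $W\mapsto K_W$ is $L\mapsto W_L=(k^\times\cap (L^\times)^N)/(k^\times)^N$ (the one genuinely nontrivial input, but exactly the content of Kummer theory being invoked), and (2), (3) then follow from the lattice argument you give, the only point needing care being the one you flag --- that $K_{W_1}\cap K_{W_2}$ is again abelian of exponent dividing $N$ over $k$ (as a subextension of the Kummer extension $K_{W_1}K_{W_2}$, with $\mu_N\subset k$), so that it lies in the image of the correspondence, and that injectivity of $W\mapsto K_W$ (which follows from (1)) converts the identities $K_{W_1}K_{W_2}=K_{W_1W_2}$ and $K_{W_1}\cap K_{W_2}=K_{W_1\cap W_2}$ into the stated biconditionals. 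In (2) you could shortcut the appeal to the correspondence: with the paper's definition of $K_W$, the equality $K_{W_1W_2}=K_{W_1}K_{W_2}$ is direct, since any $w=w_1w_2$ has $w^{1/N}=\zeta^j w_1^{1/N}w_2^{1/N}$ with $\zeta^j\in\mu_N\subset k$. Your secondary Galois-theoretic argument for (1) is sketchier (one must know the restriction map on Galois groups is dual to the inclusion of subgroups of $k^\times/(k^\times)^N$), but it is redundant given your main argument, so there is no gap.
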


\subsection{Group theoretic description for $\Sha^2_\omega(k, \wh T_{L/k})$ and $\Sha^2(k, \wh T_{L/k})$}

For the rest of this section, let $k$ be a global field in which $p^{-1}\in k$ and $L=\prod_{i=0}^m K_i$ an étale $k$-algebra as in Section 2.2. 
Let $N = p^n$ be a power of $p$ such that $[K_i:k]$ divides $N$ for all $i$. Suppose that $k$ contains a primitive $N$-th root of unity. We further assume that each $K_i$ can be written as $k(\alpha_i)$, where $\alpha_i = {a_i}^{1/p^n}$ for some $a_i \in \Q^{\times}$. We may assume $a_i \in \Z$: if $a_i = \frac{a}{b}$, we can set  $a'_i = a_i b^{p^n}$ so that $k({a_i}^{1/p^n}) = k({a'_i}^{1/p^n})$.

The correspondence \eqref{Kummer Theory} enables us to describe $\Sha^2_\omega(k, \wh T_{L/k})$ and $\Sha^2(k, \wh T_{L/k})$ in terms of information in the group $k^{\times}/(k^{\times})^{p^n}$. First, we set $W_i = \langle \overline{a_i} \rangle$ to be the subgroup corresponding to $K_i$. For any nonempty subset $I$ of $\mathcal{I} = \{1, \dotsc, m\}$, we let $W_I = \langle \overline{a_i} \mid i \in I \rangle$ be the group corresponding to $M_I$. We define $W_i(d)$, $W_I(d)$ as groups corresponding to $K_i(d)$ and $M_I(d)$, respectively. Note that the order of $\overline{a_i}$ in $k^{\times}/(k^{\times})^{p^n}$ is $p^{\epsilon_i}$, so $K_i(d) = k(a_i^{p^{\epsilon_i - d}/p^n})$ and $W_i(d) = \left\langle \overline{a_i^{p^{\epsilon_i - d}/p^n}} \right\rangle$.

We translate the first definitions in Section 2 as follows.
\begin{enumerate}
    \item For $i, j \in I$, $i$ and $j$ are $\ell$-equivalent if and only if $W_i(\ell) = W_j(\ell)$.
    \item The set $U_r = \{ i \in \mathcal{I} \mid W_0(r) = W_0 \cap W_i = W_i(r) \}$.
    \item For any subset $c \subset \mathcal{I}'$, $L(c) = \min\left\{ \ell \mid W_i(\ell) = W_i \cap W_j = W_j(\ell) \,\text{ for any }\, i, j \in c \right\}$.
\end{enumerate}

With the above language, we can rewrite the definitions of algebraic patching degrees and algebraic degrees of freedom. If $U_0 = \mathcal{I}$, then we set the algebraic patching degree $\Delta^{\omega}_0 = \epsilon_0$. Otherwise, the algebraic patching degree of freedom $\Delta^{\omega}_r$ for nonempty $U_r$ is the maximal positive integer $d$ satisfying two conditions:
\begin{enumerate}[label = (\roman*)]
    \item If $U_{>r}$ is nonempty, then $W_{U_{>r}}(d) \subset \bigcap\limits_{i \in U_r} W_0(d) W_i(d)$.
    \item If $U_{<r}$ is nonemtpy, then $W_{U_r}(d) \subset \bigcap\limits_{i \in U_{< r}} W_0(d) W_i(d)$.
\end{enumerate}
Now the algebraic degree of freedom $f_c^{\omega}$ for an admissible set $c \subset U_r$ can be defined as the largest nonnegative integer $f\le \Delta^{\omega}_r$ satisfying two conditions:
\begin{enumerate}[label = (\roman*)]
    \item $W_c(f + L(c) - r)$ is a cyclic group or a bicyclic group.
    \item $W_0(f) \subset W_c(f + L(c) - r)$.
\end{enumerate}

Before we rewrite the definition of patching degrees and degrees of freedom, recall that we have to check whether a field is locally cyclic in the definition of patching degrees $\Delta_r$. We need to describe whether a Kummer extension is locally cyclic in terms of groups, too. Let $K/k$ be a Kummer extension and $v$ a place of $k$. Let $w$ be a place of $K$ lying over $v$. The decomposition group $G_v = \Gal(K_w/k_v)$ corresponds to a subgroup $W_v$ of $k_v^{\times}/(k_v^{\times})^{p^n}$ through the duality between $\Gal(K/k)$ and $W$.
\begin{center}
    \begin{tikzcd}
        \Gal(K_w/k_v) = G_v \arrow[d, hookrightarrow] \arrow[r, leftrightarrow] & W_v \subset k_v^{\times}/(k_v^{\times})^{p^n} \\
        \Gal(K/k) \arrow[r, leftrightarrow] & W \subset k^{\times}/(k^{\times})^{p^n} \arrow[u, twoheadrightarrow, "\pi_v"']
    \end{tikzcd}
\end{center}
The map $\pi_v: W \to W_v$ is induced by the map $k^{\times} \hookrightarrow k_v^{\times}$ whose image is dense in $k_v^{\times}$. $W_v$ is a finite set and hence $\pi_v$ is surjective. Recall that $K/k$ is locally cyclic at $v$ means that $K_w/k_v$ is cyclic for any $w \mid v$, and this is equivalent to saying that $\pi_v(W)$ is cyclic for any $v$. 

Now we can redefine the patching degree $\Delta_r$ to be the maximal positive integer $d \leq \Delta^{\omega}_r$ satisfying two conditions:
\begin{enumerate}[label = (\roman*)]
    \item If $U_{> r}$ is nonempty then $\pi_v(W_0(d) W_{U_{>r}}(d))$ is cyclic for all places $v$ in $k$.
    \item If $U_{< r}$ is nonempty then$\pi_v(W_0(d) W_{U_r}(d))$ is cyclic for all places $v$ in $k$.
\end{enumerate}
On the other hand, for an admissible set $c \subset U_r$ the degree of freedom $f_c$ is the largest nonnegative integer $f\le f_c^\omega$ such that $\pi_v(W_c(f + L(c) - r))$ is a cyclic group for any place $v$ of $k$.

\subsection{Cyclotomic cases: combinatorial description for $\Sha^2_\omega(k,\wh T_{L/k})$ and $\Sha^2(k, \wh T_{L/k})$}

In this subsection we define
\begin{align*}
    \mathcal{W} = \langle \overline{a}: a \in \Q^{\times} \rangle \subset k^{\times}/(k^{\times})^{p^n},
\end{align*}
that is, the image of $\iota: \Q^{\times}/(\Q^{\times})^{p^n} \longrightarrow k^{\times}/(k^{\times})^{p^n}$. Because each component $K_i$ of $L$ is of the form $k(a_i^{1/p^n})$ where $a_i$ is an integer, the group $W_i$ corresponding to $K_i$ is contained in $\mathcal{W}$. We shall investigate the structure of $\mathcal{W}$. Let $\bbP$ denote the set of prime numbers in $\Q$.


\begin{prop}
\begin{enumerate}[label = (\arabic*)]
    \item If $p$ is odd, then $\mathcal{W} \simeq \Q_{>0}/(\Q_{>0})^{p^n}\simeq \bigoplus\limits_{\ell\in \bbP} \Z/p^n\Z$.
    \item Suppose $p = 2$ and we denote $\mathbb{P}$ the set of prime integers.
    \begin{enumerate}[label = (\alph*)]
        \item If $N = 2$, then $\mathcal{W} \simeq \bigoplus\limits_{\ell \in \mathbb{P} \cup \{-1\}} \Z/2\Z$.
        \item If $N = 4$, then $\mathcal{W} \simeq \Z/2\Z \times \bigoplus\limits_{\ell \in \mathbb{P}} \Z/4\Z$.
        \item If $N \geq 8$, then $\mathcal{W} \simeq \Z/2\Z \times \Z/2^{n-1}\Z \times \bigoplus\limits_{\ell \in \mathbb{P} \setminus \{2\}} \Z/2^n\Z$.
    \end{enumerate}
\end{enumerate}
\end{prop}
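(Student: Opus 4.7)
My plan is to determine the image of the map
\[ \iota: \Q^\times/(\Q^\times)^{p^n} \to k^\times/(k^\times)^{p^n}, \qquad k = \Q(\zeta_{p^n}), \]
by choosing a canonical basis of the source coming from prime factorization and then computing the orders of each basis element's image in the target, together with their mutual dependencies.

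First I would use $\Q^\times \cong \{\pm 1\} \oplus \bigoplus_{\ell \in \mathbb{P}} \ell^\Z$ to identify
\[ \Q^\times/(\Q^\times)^{p^n} \cong \bigl(\{\pm 1\}/\{\pm 1\}^{p^n}\bigr) \oplus \bigoplus_{\ell \in \mathbb{P}} \Z/p^n\Z, \]
noting that the $\{\pm 1\}$-factor vanishes for $p$ odd and equals $\Z/2\Z$ for $p = 2$. For each generator $\bar a$ (with $a = -1$ or $a$ a prime), the order of $\iota(\bar a)$ equals $[k(a^{1/p^n}):k] = [\Q(a^{1/p^n}):\Q(a^{1/p^n})\cap k]$, which I would compute by identifying the intersection.

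The case analysis proceeds as follows. \textbf{(1) $p$ odd:} Since $\Q(\zeta_{p^n})/\Q$ is cyclic, every subfield of $k$ is Galois over $\Q$; but for $d \geq 1$, $\Q(\ell^{1/p^d})$ is non-Galois (it lies in $\R$ and misses the non-real $\zeta_{p^d}$). Hence $\Q(\ell^{1/p^n}) \cap k = \Q$ and each $\iota(\bar\ell)$ has maximal order $p^n$. \textbf{(2) $p = 2$:} The case $N = 2$ ($k = \Q$) is trivial. For $N \geq 4$ and $\ell$ odd, the same Galois-versus-real obstruction gives $\Q(\ell^{1/N}) \cap k = \Q$, hence order $N$. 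For $a = -1$, the identification $(-1)^{1/N} = \zeta_{2N}$ gives order $[\Q(\zeta_{2N}):\Q(\zeta_N)] = 2$. For $\ell = 2$: when $N = 4$, $\sqrt{2} \notin \Q(i)$, so the intersection is $\Q$ and the order is $4$; when $n \geq 3$, $\sqrt{2} = \zeta_8 + \zeta_8^{-1} \in k$, and one shows $\Q(2^{1/2^n}) \cap k = \Q(\sqrt{2})$ by observing that $\Q(2^{1/2^d})$ is non-Galois for $d \geq 2$, which yields order $2^{n-1}$.

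Finally I would establish independence. For any prime $\lambda$ unramified in $k$, the valuation $v_\mathfrak{q}$ at a prime $\mathfrak{q} \mid \lambda$ of $k$ shows $c_\lambda \equiv 0 \pmod{p^n}$ in any kernel relation $\prod a_i^{c_i} = y^{p^n}$; this handles all primes $\neq p$ (resp.\ all odd primes when $p = 2$). For $N \geq 8$, the independence of $\iota(\bar{-1})$ from $\langle \iota(\bar 2)\rangle$ reduces to the claim $\zeta_{2^{n+1}} \notin k(2^{1/2^n})$, which follows since the maximal real subfield of $k(2^{1/2^n})$ is $\Q(2^{1/2^n})$ while $\zeta_{2^{n+1}}$ is non-real. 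I expect the principal obstacle to lie in the $p = 2$, $\ell = 2$ subcases: pinning down $\Q(2^{1/2^n}) \cap k$ for $n \geq 3$ and verifying the independence of $\bar{-1}$ and $\bar 2$. The case $N = 4$ deserves particular care, since the identity $(1+i)^4 = -4$ gives $\iota(\overline{-4}) = 0$ in $k^\times/(k^\times)^4$, producing a nontrivial kernel element that must be carefully tracked in arriving at the final presentation of $\mathcal{W}$.
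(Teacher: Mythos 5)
Your overall strategy --- computing the order of each generator's image as $[k(a^{1/N}):k]$ and then establishing independence via valuations at primes unramified in $k$, with separate treatment of $-1$, $2$ and $p$ --- is essentially the paper's own argument reorganized, and most of the individual steps are fine. However, the subtlety you flag at the end of the $N=4$ case is not something that can merely be "tracked into the final presentation": it is fatal to statement 2(b) as written. From $(1+i)^4=-4$ one gets $\overline{-1}=\overline{2}^{\,2}$ in $\Q(i)^\times/(\Q(i)^\times)^4$, so the image of $\langle\overline{-1},\overline{2}\rangle$ is the cyclic group $\langle\overline{2}\rangle$ of order $4$, and hence $\mathcal{W}\simeq\bigoplus_{\ell\in\mathbb{P}}\Z/4\Z$, which is not isomorphic to the asserted $\Z/2\Z\times\bigoplus_{\ell\in\mathbb{P}}\Z/4\Z$ (in the former every element of order $2$ is twice an element, while in the latter the generator of the $\Z/2\Z$ factor is not). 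The paper's proof goes wrong at exactly this point: it deduces that $\ker\iota$ is trivial for $N=4$ from the injectivity of $\iota$ on $\langle\overline{-1}\rangle$ and on $\langle\overline{2}\rangle$ separately, which does not imply injectivity on $\langle\overline{-1},\overline{2}\rangle$. You cannot simultaneously keep your (correct) observation and arrive at the stated conclusion; carried out honestly, your plan produces a corrected presentation for $N=4$ rather than a proof of the one claimed.

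There is also a genuine flaw in your $N\ge 8$ argument: the claim that the maximal real subfield of $k(2^{1/2^n})$ is $\Q(2^{1/2^n})$ is false for $n\ge 4$. That subfield is the fixed field of complex conjugation, of degree $2^{2n-3}$ over $\Q$, which exceeds $[\Q(2^{1/2^n}):\Q]=2^n$ once $n\ge 4$ (it contains $\Q(\zeta_{2^n})^{+}$ as well as $\Q(2^{1/2^n})$). The conclusion $\zeta_{2^{n+1}}\notin k(2^{1/2^n})$ is nevertheless true and can be recovered by a degree count: if $\Q(\zeta_{2^{n+1}})\subseteq k(2^{1/2^n})$, then $[k(2^{1/2^n}):\Q(\zeta_{2^{n+1}})]=2^{2n-2}/2^{n}=2^{n-2}$, so $2^{2^{n-2}}$ is a $2^n$-th power in $\Q(\zeta_{2^{n+1}})$ and hence $2^{1/4}\in\Q(\zeta_{2^{n+1}})$, contradicting that $\Q(2^{1/4})/\Q$ is not Galois. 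Finally, a minor point: each of your "non-Galois, hence trivial intersection" steps quietly uses that every subfield of $\Q(a^{1/p^n})$ (real root, $a>0$) is of the form $\Q(a^{1/p^d})$; this is standard (any subgroup of the Galois group of the splitting field that contains the cyclotomic part splits off the Kummer part), but it should be stated, since a priori the intersection with $k$ need not be one of the fields $\Q(a^{1/p^d})$ you inspect.
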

\begin{proof}
\begin{enumerate}[label = (\arabic*)]
    \item Suppose $a$ is an integer such that $a = \alpha^{p^n}$ for some $\alpha \in k^{\times}$. Let $\ell$ be a prime integer not equal to $p$, then $\ell$ is unramified in $k$ and hence the valuation $v_{\ell}$ sends each element of $k^{\times}$ to an integer. Therefore, $v_{\ell}(a) = p^n v(\alpha) \in p^n \Z$, i.e., $p^n \mid n_i$ for all $i$. We may then assume that $a = \pm p^r$ for some $r \in \Z_{\geq 0}$. Because $\iota(-1) = 1$, we may further assume $a = p^r$. If $r > 0$, without loss of generality, we can write $p^p = \alpha^{p^n}$, i.e., $p = \alpha^{p^{n-1}}$ where $\alpha \in k = \Q(\zeta_{p^n})$. This shows that $\sqrt[p^{n-1}]{p} \in \Q(\zeta_{p^n})$. On the other hand, we consider the Galois group of $k(p^{1/p}) = \Q(\zeta_{p^n}, p^)$. In this group we have automorphisms
    \begin{align*}
        \tau_a : \zeta_{p^n} \mapsto \zeta_{p^n}^a, \quad \sqrt[p^{n-1}]{p} \mapsto \sqrt[p^{n-1}]{p}
    \end{align*}
    for $a \in (\Z/p^n \Z)^{\times}$. Also we have
    \begin{align*}
        \sigma: \zeta_{p^n} \mapsto \zeta_{p^n}, \quad \sqrt[p^{n-1}]{p} \mapsto \sqrt[p^{n-1}]{p} \, \zeta_{p^{n-1}}.
    \end{align*}
    But $\tau_a \sigma \tau_a^{-1} \neq \sigma$, so the Galois group is not abelian. This contradicts the above conclusion $\Q(\sqrt[p^{n-1}]{p}, \zeta_{p^n}) = \Q(\zeta_{p^n})$. Therefore, the integer $a$ must be $1$, and hence we conclude that $\ker(\iota) = \{\overline{1}\}$.
    
    \item When $N = p = 2$, $k$ is simply $\Q$ and thus
    \begin{align*}
        \mathcal{W} = \Q^{\times}/(\Q^{\times})^2 \simeq \bigoplus\limits_{\ell \in \mathbb{P} \cup \{-1\}} \Z/2\Z.
    \end{align*}

    Now suppose $N = 2^n \geq 4$. Observe that $\sqrt{2} \in \Q(\zeta_8)$ and $-1 \in (k^{\times})^2$. If $\ell$ is a prime integer other than $2$, then the argument in part (1) applies, so $\ker(\iota) = \ker(\iota|_{\langle \overline{-1}, \overline{2}\rangle})$. It suffices to study the restriction of $\iota$ to $\langle \overline{-1}, \overline{2} \rangle$. First, the restriction of $\iota$ to $\langle \overline{-1} \rangle$ is injective: if $-1 = \alpha^{2^n}$ for some $\alpha \in k$, then $k$ must contain primitive $2^{n+1}$-roots of unity, which is absurd. Next, we turn to the restriction of $\iota$ to $\langle \overline{2} \rangle$. Note that $\sqrt{2} \in \Q(\zeta_8)$, while $\sqrt[4]{2}$ is not contained in $\Q(\zeta_N)$ since $\Q(\sqrt[4]{2})/\Q$ is not an abelian extension. From this, we deduce that if $N = 4$, then the restriction of $\iota$ to $\langle \overline{2} \rangle$ is injective. We also deduce that if $N \geq 8$, then the kernel of the restriction is $\langle \overline{2^{2^{n-1}}} \rangle$. In conclusion, if $N = 4$, then $\ker \iota$ is trivial and
    \begin{align*}
        \mathcal{W} \simeq \Z/2\Z \times \bigoplus\limits_{\ell \in \mathbb{P}} \Z/4\Z;
    \end{align*}
    if $N = 2^n \geq 8$, then $\ker \iota = \ker(\iota|_{\langle \overline{2} \rangle})$ and
    \begin{align*}
        \mathcal{W} \simeq \Z/2\Z \times \Z/2^{n-1}\Z \times \bigoplus\limits_{\ell \in \mathbb{P} \setminus \{2\}} \Z/2^n\Z. \text{\qed }
    \end{align*} 
\end{enumerate}
\end{proof}

The structure of $\mathcal{W}$ determined, we may describe the corresponding groups $W_i$ of the cyclic fields $K_i$ in combinatorial terms. Note that each $W_i$ is a finite cyclic subgroup of $\mathcal{W}$ for $0\le i\in m$, so we can use only finitely many generators to describe the groups $W_i$. For example, suppose $N = 2^n \geq 8$ and $K = k(a^{1/N})$ for some integer $a \neq 0$. If 
\begin{align*}
    a = (-1)^{n_{-1}} \cdot 2^{n_2} \cdot \prod\limits_{\ell \in \mathbb{P'} \setminus \{2\}} \ell^{n_{\ell}}
\end{align*}
for a finite subset $\bbP'\subset \bbP$,  
then the corresponding finite subgroup is the cyclic subgroup of $\mathcal{W}$ generated by $(\overline{n_{-1}}, \overline{n_2}, ({\ol n_{\ell})_{\ell\in \bbP'\setminus \{2\}}})$.

Using Proposition 3.1, one can compute effectively algebraic patching degrees $\Delta^{\omega}_r$ and algebraic degrees of freedom $f^{\omega}_c$. However, to compute patching degrees $\Delta_r$ and $f_c$, we will need to analyze further the image of a subgroup $W$ in $k_v^{\times}/(k_v^{\times})^{p^n}$. We shall do this when each $K_i$ is in a fixed concrete bicyclic extension in the next section. 



\section{Computing decomposition groups: the case of subfields contained in a bicyclic extension}\label{sec:D}

In the following sections, we shall further restrict to a special case. Fix a prime integer $p$ and a positive integer $n$. Let $k:=\Q(\zeta)$ be the $p^n$-th cyclotomic field, where $\zeta$ is a primitive $p^n$-th root of unity in $\Qbar$, the algebraic closure of $\Q$ in $\C$. We fix an algebraic closure $\Qbar_\ell$ of $\Q_\ell$ and an embedding $\Qbar \embed \Qbar_\ell$. 

Let $\ell_1$ and $\ell_2$ be two distinct prime integers with $\ell_i\neq p$,
and let $F:=k(\alpha_1,\alpha_2)$, where $\alpha_1=\ell_1^{1/p^n}$ and
$\alpha_2=\ell_2^{1/p^n}$. Let $m\ge 1$ be a positive integer. We assume that each component $K_i$ of the étale $k$-algebra $L=\prod_{i=0}^m K_i$ is of the form $K_i=k(\alpha_1^{a_i} \alpha_2^{b_i})$, that is, a cyclic
subextension of $F/k$, where $a_i$ and $b_i$ are integers satisfying $0 \le a_i, b_i < p^n$. The Galois group $G=\Gal(F/k)$ is bicyclic of order 
$p^{2n}$ generated by two elements $\tau_1, \tau_2$,
\[ \tau_1(\alpha_1)=\alpha_1 \zeta, \quad
\tau_1(\alpha_2)=\alpha_2,\quad 
\tau_2(\alpha_1)=\alpha_1,\quad \tau_2(\alpha_2)=\alpha_2 \zeta. \]
Therefore, any subfield of the form $M_c(d)$, which appears in the definition of algebraic degrees of freedom, is automatically a subfield of the bicyclic extension $F$.

\subsection{Decomposition groups and local cyclicity}

Set $F_i =k(\alpha_i)$ for $i=1,2$. Write $G_i=\Gal(F_i/k)$ and we
have a natural isomorphism 
\[ G \isoto G_1\times G_2, \quad \sigma \mapsto (\sigma|_{F_1},
\sigma|_{F_2}). \]
For any prime $\ell$, write $w$, $w_1$, $w_2$, and $v$ for the places
of $F$, $F_1$, $F_2$ and $k$, respectively, lying over $\ell$ with
respect to the embedding $\Qbar \embed \Qbar_\ell$. If $\ell\nmid p
\ell_1 \ell_2$, then $\ell$ is unramified in both $F_1$ and $F_2$ and
hence $\ell$ is
unramified in $F$. Let $G_v$, $G_{1,v}$ and $G_{2,v}$ be the
decomposition groups of $v$ in $G$, $G_1$ and $G_2$, respectively.

For any integers $m_1\neq 0$ and $r$, denote by $[r]_{m_1}$ the residue class of $r$ in $\Z/m_1\Z$. If $m_1$ and $r$ are coprime, let $\ord([r]_{m_1})$ denote the order of $[r]_{m_1}$ in $(\Z/m_1\Z)^\times$. In the following lemma we investigate the ramification after we add a $p^n$-th root of an integer to $\Q_{\ell}(\zeta)$.

\begin{lemma}\label{lm:G1v} Let $\ell \neq p$ be a prime number.

  {\rm (1)} For any positive integer $s$, the field
      extension $\Q_\ell(\zeta, \ell^{s/p^n})/\Q_\ell(\zeta)$ 
      is totally ramified of
      degree $p^{n-v_p(s)}$, where $v_p$ is the normalized
      valuation at $p$. 

  {\rm (2)} For any positive integer $r$ not divisible by $\ell$, 
      the field extension $\Q_\ell(\zeta, r^{1/p^n})/\Q_\ell(\zeta)$
      is unramified of degree 
      \begin{equation}
        \label{eq:f2/f1}
   p^{\max\{\min\{n,s_1\}-(s_1-s_2), 0 \}}     
      \end{equation}
   where $s_1 =v_p(\ell-1)$ and $s_2=v_p(\ord([r]_\ell))$. 
\end{lemma}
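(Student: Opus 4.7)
My plan begins with the observation that because $\ell \neq p$, the extension $\Q_\ell(\zeta)/\Q_\ell$ is unramified of degree $f = \ord([\ell]_{p^n})$, the multiplicative order of $\ell$ modulo $p^n$, with $\ell$ still a uniformizer and residue field $\F_{\ell^f}$. For part (1), I would apply Kummer theory: since $\mu_{p^n}\subset \Q_\ell(\zeta)$, the degree of $\Q_\ell(\zeta, \ell^{s/p^n})/\Q_\ell(\zeta)$ equals the order of $\overline{\ell^s}$ in $\Q_\ell(\zeta)^\times/(\Q_\ell(\zeta)^\times)^{p^n}$. Writing $d=\gcd(s,p^n)=p^{v_p(s)}$ and $m=p^{n-v_p(s)}$, a valuation count using that $\ell$ is a uniformizer and that $s/d$ is coprime to $m$ shows $\overline{\ell^s}$ has order exactly $m$, giving the asserted degree. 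Total ramification will follow from the same reasoning: in any such extension, $v(\ell^{s/p^n})$ must be an integer, forcing the ramification index $e$ to satisfy $m\mid e$ and hence $e=m$.

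For part (2), I would first argue the extension is unramified by decomposing $\Q_\ell(\zeta)^\times \cong \ell^{\Z} \times \mu_{\ell^f-1}\times U^{(1)}$ with $U^{(1)} = 1 + \ell\,\mathcal{O}$ pro-$\ell$, hence uniquely $p^n$-divisible (since $p\neq \ell$). Thus the image of the unit $r$ in the Kummer group $\Q_\ell(\zeta)^\times/(\Q_\ell(\zeta)^\times)^{p^n}$ reduces to the image of its Teichm\"uller representative $\bar r \in \F_\ell^\times\subset \F_{\ell^f}^\times$ in the cyclic quotient $\mu_{\ell^f-1}/\mu_{\ell^f-1}^{p^n}$, which has order $p^{\min(n,\,v_p(\ell^f-1))}$. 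Since $\bar r$ has order $\ord([r]_\ell)$ with $p$-part $p^{s_2}$, a standard cyclic-group argument will express this order as
\begin{equation*}
    p^{\max\{0,\, \min(n, v_p(\ell^f-1)) - (v_p(\ell^f-1) - s_2)\}}.
\end{equation*}

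The final step is to evaluate $v_p(\ell^f-1)$ in terms of $s_1$ and $n$. For odd $p$ with $s_1\ge 1$, lifting-the-exponent will give $f = p^{\max(0,\,n-s_1)}$ and $v_p(\ell^f-1) = \max(s_1, n)$; the edge case $s_1=0$ forces $s_2=0$, making the claimed exponent zero. A case split on $s_1 \ge n$ versus $s_1 < n$ will then simplify the displayed exponent above to $\max\{\min(n,s_1)-(s_1-s_2),\, 0\}$, matching \eqref{eq:f2/f1}. The hardest part will be the bookkeeping around the LTE cases, especially the $p=2$ subtlety where LTE takes a different form depending on $\ell \bmod 4$, which would need separate but analogous treatment.
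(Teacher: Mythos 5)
Your route is genuinely different from the paper's and, where you carry it out, it is sound. For (1) the paper reduces to $s=1$ and applies Eisenstein to $X^{p^n}-\ell$ over $\Z_\ell[\zeta]$, whereas you read the degree off as the order of $\overline{\ell^{\,s}}$ in $\Q_\ell(\zeta)^\times/(\Q_\ell(\zeta)^\times)^{p^n}$ and then force $e=p^{n-v_p(s)}$ by a valuation count; both work. For (2) the paper computes the residue degrees $f_1,f_2$ via the splitting field of $X^{p^n}-r$ over $\F_\ell$, while you decompose $\Q_\ell(\zeta)^\times\cong \ell^{\Z}\times\mu_{\ell^{f}-1}\times U^{(1)}$ with $U^{(1)}$ uniquely $p^n$-divisible, so everything is governed by the Teichm\"uller representative of $r$; this also gives unramifiedness directly and is arguably cleaner than the paper's finite-field argument. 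Your intermediate exponent $\max\{0,\,\min(n,v)-(v-s_2)\}$ with $v=v_p(\ell^{f}-1)$, $f=\ord([\ell]_{p^n})$, is correct, and for odd $p$ (where $f=p^{\max(0,n-s_1)}$, $v=\max(s_1,n)$, and the case $s_1=0$ forces $s_2=0$) it collapses to \eqref{eq:f2/f1}; modulo writing out these computations, your proposal proves the lemma for odd $p$.

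The gap is the final claim that the $p=2$ case needs only ``separate but analogous treatment'': for $\ell\equiv 3\pmod 4$ no bookkeeping can make your exponent match \eqref{eq:f2/f1}, because the two genuinely disagree there. Indeed, for $\ell\equiv 3\pmod 4$ one has $s_1=1$ and, for $n\ge 2$, $f=2^{\max(1,\,n-v_2(\ell+1))}$, hence $v=\max(v_2(\ell+1)+1,\,n)$; if $2\le n\le v_2(\ell+1)$ your exponent is $\max\{0,\,n-v_2(\ell+1)-1+s_2\}=0$, while \eqref{eq:f2/f1} gives $s_2$, which can equal $1$. Concretely, take $p=2$, $n=2$, $\ell=7$, $r=3$: since $3$ has order $6$, which divides $12$, in $\F_{49}^\times$, it is a fourth power in the unramified quadratic field $\Q_7(\zeta_4)$ (Hensel), so the extension in question is trivial, whereas \eqref{eq:f2/f1} predicts degree $2$. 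So the failure is not of your method but of the stated formula — the paper's own proof uses $f_1=p^{\max\{n-s_1,0\}}$, which is false for $p=2$, $\ell\equiv 3\pmod 4$ since $(\Z/2^n\Z)^\times$ is not cyclic — and your write-up should either restrict to odd $p$ (or to $\ell\equiv 1\pmod 4$ when $p=2$), or replace \eqref{eq:f2/f1} by your exponent $\max\{0,\,\min(n,v)-(v-s_2)\}$ in the remaining case, rather than assert that the LTE case split returns the displayed expression.
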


\begin{proof}
  (1) We first consider the case where $s=1$. As $\Q_\ell(\zeta, \ell^{1/p^n})$ is the splitting field of the polynomial $f(X) = X^{p^n}-\ell$ over $\Q_\ell(\zeta)$, it suffices to show that $f(X)$ is irreducible. Since $\ell$ is unramified in
      $\Q(\zeta)$, the element $\ell$ is a uniformizer of the complete discrete valuation ring $\Z_\ell[\zeta]$. 
      By Eisenstein's criterion, $f(X)$ is irreducible in $\Z_\ell[\zeta][X]$.
      Therefore, $\Q_\ell(\zeta, \ell^{1/p^n})/\Q_\ell(\zeta)$ is totally ramified of degree $p^n$. 
    
      For general $s$, write $s=p^{v_p(s)} s'$. Then
      $\Q_\ell(\zeta, \ell^{s/p^n}) = \Q_\ell(\zeta, \ell^{s'/p^{n'}})$
      with $n'=n-v_p(s)$. Since $s'$ is prime to $p$, $\Q_\ell(\zeta, \ell^{s'/p^{n'}})=\Q_\ell(\zeta, \ell^{1/p^{n'}})$ is a totally ramified extension of has degree $p^{n'}$ over $\Q_\ell(\zeta)$ of degree $p^{n-v_p(s)}$.

  (2) Since $\ell\nmid p r$, the prime $\ell$ is unramified in
      both $\Q(\zeta)$ and $\Q(r^{1/p^n})$, and therefore 
      $\Q_\ell(\zeta, r^{1/p^n})$ is an unramified extension over 
      $\Q_\ell(\zeta)$. Denote the residue fields of 
      $\Q_\ell(\zeta)$ and $\Q_\ell(\zeta, r^{1/p^n})$ by
      $\F_{\ell^{f_1}}$ and $\F_{\ell^{f_2}}$, respectively. Then
      \[ [\Q_\ell(\zeta, r^{1/p^n}):\Q_\ell(\zeta)]= \frac{f_2}{f_1}. \] 
      We have $f_1=\ord ([\ell]_{p^n})$, the smallest positive integer $f$ such that $\ell^f \equiv 1 \pmod p^n$. 
      Put $s_1=v_p(\ell-1)$, the smallest positive integer $s$ such that $\ell\equiv 1 \pmod p^s$. If $s_1=0$, let $f_0$ be the smallest
      positive integer such that $p$ divides $\ell^{f_0}-1$, then we have
      $f_1=f_0 p^{n-1}$. If $s_1>0$, then $f_1=p^{\min\{n-s_1,0\}}$. 
      
      We know $\F_{\ell^{f_2}}$ is the splitting field of
      the polynomial $f(X)=X^{p^n}-r$ over $\F_\ell$. Let $G$ be the the finite abelian
      group in $\ol\F_\ell^\times$ generated by all roots $\alpha$ of
      $f(X)$. Since $p$ divides the cardinality of $G$, every root $\alpha$ has order $p^n
      \ord([r]_\ell)$ by the fundamental theorem of abelian
      groups. Thus, $f_2$ is the smallest positive integer such
      that $p^n \ord([r]_\ell)$ divides $\ell^{f_2}-1$.
      Put $s_2 = v_p(\ord([r]_\ell))$. If $s_1=0$, then $s_2=0$ and $f_2=f_0 p^{n+s_2-1}=f_0 p^{n-1}$. 
      If $s_1>1$, then 
      $f_2=p^{\min\{n+s_2-s_1,0\}}$. 

      Thus, if $s_1=0$, then $f_2/f_1=1$, If $s_1\ge 1$, then   
      \[ \frac{f_2}{f_1}=
      \begin{cases}
        p^{s_2} & \text{if $s_1\le n$;} \\
        p^{n-(s_1-s_2)} & \text{if $s_1-s_2 \le n \le s_1$;} \\
        1 & \text{if $n\le s_1-s_2$.}  
      \end{cases} \]
      This gives the degree in \eqref{eq:f2/f1}. \qed 
\end{proof}

Now we investigate the structure of the decomposition group $G_v$, where $v$ is a place of $k$ lying over the prime $\ell$.

\begin{lemma}\label{lm:Gv} Let $\ell$ be a prime and $v$ a place of $k$ lying over $\ell$.
Let $G_v$, $G_{1,v}$ and $G_{2,v}$ be the decomposition groups of $v$
in $G$, $G_1$ and $G_2$, respectively.

(1) If $\ell=p$, then $G_v$ is a cyclic group.

(2) If $\ell$ is $\ell_1$ or $\ell_2$, then $G_v\simeq G_{1,v} \times G_{2,v}$. Moreover, if $\ell=\ell_1$, then $G_{1,v}\simeq \Z/p^n\Z$ and $G_{2,v}\simeq \Z/p^{m_{12}}\Z$, where 
\[ 
    m_{12} = \max\left\{ \min\{n,s_1\}-(s_1-s_2), 0 \right\}, \quad
    s_1:=v_p(\ell_1-1), \quad s_2:=v_p(\ord([\ell_2]_{\ell_1})). \]
If $\ell=\ell_2$, then $G_{1,v}\simeq \Z/p^{m_{21}}
    \Z$ and $G_{2,v}\simeq \Z/p^{n}\Z$, where 
\[ m_{21}=\max\{\min\{n,s_1\}-(s_1-s_2), 0 \}, \quad
    s_1:=v_p(\ell_2-1),\quad s_2:=v_p(\ord([\ell_1]_{\ell_2})). \]
\end{lemma}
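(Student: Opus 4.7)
The plan is to identify $G_v$ with $\Gal(F_w/k_v)$ for a fixed place $w$ of $F$ above $v$, and then exploit the Kummer description $F_w=k_v(\alpha_1,\alpha_2)$, which is available because $k$ (and hence $k_v$) contains the $p^n$-th roots of unity. Under the factorization $G\isoto G_1\times G_2$, the subgroup $G_v$ sits inside $G_{1,v}\times G_{2,v}$, and I will describe it explicitly in each case.

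For part (1), both $\ell_1$ and $\ell_2$ are units in $\Z_p^\times\subseteq k_v^\times$. By Kummer theory, $G_v=\Gal(F_w/k_v)$ is Pontryagin-dual to the subgroup $H:=\langle[\ell_1],[\ell_2]\rangle$ of $k_v^\times/(k_v^\times)^{p^n}$, so it suffices to prove that $H$ is cyclic. Decompose $\Z_p^\times=\mu_{p-1}\times(1+p\Z_p)$. Since $p\equiv 1\pmod{p-1}$, the $p^n$-th power map is the identity on $\mu_{p-1}$, so $\mu_{p-1}\subseteq (k_v^\times)^{p^n}$, and the class of each $\ell_i$ modulo $p^n$-th powers is represented by its principal unit $u_{\ell_i}\in 1+p\Z_p$. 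For $p$ odd, the $p$-adic logarithm identifies $1+p\Z_p$ topologically with the procyclic group $p\Z_p\cong\Z_p$, so the image of $1+p\Z_p$ in $k_v^\times/(k_v^\times)^{p^n}$ is cyclic; hence $H$ is cyclic and so is its Pontryagin dual $G_v$.

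For part (2), assume without loss of generality that $\ell=\ell_1$. By Lemma~\ref{lm:G1v}(1) with $s=1$, the completion $F_{1,w_1}=k_v(\alpha_1)$ is totally ramified over $k_v$ of degree $p^n$, so $G_{1,v}\cong\Z/p^n\Z$. By Lemma~\ref{lm:G1v}(2) applied with $r=\ell_2$ (a unit in $\Z_{\ell_1}$), $F_{2,w_2}=k_v(\alpha_2)$ is unramified over $k_v$ of degree $p^{m_{12}}$, so $G_{2,v}$ is cyclic of that order. Since $F_{1,w_1}/k_v$ is totally ramified while $F_{2,w_2}/k_v$ is unramified, their intersection is simultaneously totally ramified and unramified over $k_v$ and is therefore equal to $k_v$; hence the two extensions are linearly disjoint over $k_v$, and the compositum $F_w=F_{1,w_1}F_{2,w_2}$ has Galois group $G_{1,v}\times G_{2,v}$. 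Under the identification $G\cong G_1\times G_2$, this yields the decomposition $G_v\cong G_{1,v}\times G_{2,v}$ with the stated cyclic factors.

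The main technical point is the cyclicity of the image of $1+p\Z_p$ in $k_v^\times/(k_v^\times)^{p^n}$ used in part (1); this follows cleanly from the $p$-adic logarithm when $p$ is odd, while the case $p=2$ requires a separate verification that the torsion class $[-1]\in 1+2\Z_2$ becomes a $2^n$-th power in $k_v=\Q_2(\zeta_{2^n})$ for $n$ in the relevant range. Part (2) is then essentially a bookkeeping exercise separating totally ramified from unramified behavior via Lemma~\ref{lm:G1v}, and presents no further difficulty.
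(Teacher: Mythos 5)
Your argument is correct for odd $p$ and follows essentially the same route as the paper's: part (1) is local Kummer duality plus the procyclicity of the image of $1+p\Z_p$ in $k_v^\times/(k_v^\times)^{p^n}$ (the paper phrases this as ``every finite quotient of $1+p\Z_p\cong\Z_p$ is cyclic'' rather than invoking the $p$-adic logarithm, but it is the same point), and part (2) is the identical totally-ramified/unramified linear-disjointness argument, with the degrees supplied by Lemma~\ref{lm:G1v}.

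The one genuine loose end is the case $p=2$, which you defer. The verification you propose cannot succeed: $-1$ is a $2^n$-th power in $k_v=\Q_2(\zeta_{2^n})$ only if $\zeta_{2^{n+1}}\in\Q_2(\zeta_{2^n})$, which never holds. In fact statement (1) itself fails for $p=2$: take $n=1$, $\ell_1=3$, $\ell_2=7$; then $3$, $7$ and $21$ are all nonsquares in $\Q_2$ (none is congruent to $1$ modulo $8$), so $\Q_2(\sqrt{3},\sqrt{7})/\Q_2$ is biquadratic and $G_v$ is not cyclic. This is not a defect specific to your write-up: the paper's own proof rests on the assertion that $1+p\Z_p$ is procyclic, which is false for $p=2$ (there $1+2\Z_2=\Z_2^\times\cong\{\pm1\}\times(1+4\Z_2)$), so the published argument has exactly the same lacuna. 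Part (2) is unaffected. The clean fix is to state the lemma (or at least part (1)) for odd $p$ only, which is the setting of all the paper's examples.
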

\begin{proof}
  (1) By Kummer theory, it suffices to show that the group
      $W = \<\ell_1,\ell_2\>$ generated by $\ell_1$ and $\ell_2$ in
      $k_p^\times /(k_p^\times)^{p^n}$ is cyclic. Note that $W$ is a finite $p$-group contained in the image of $\Zp^\times$ and hence in the image of $1+p \Zp$. As a profinite group $1+p\Zp$ is
      isomorphic to $\Zp$, and any finite quotient of $1+p\Zp$ is isomorphic to $(1+p\Zp)/(1+p^{r+1}\Zp)\simeq \Zp/p^r \Zp$ for some $r \ge 0$, which is a cyclic 
      group. Therefore, $W$ is cyclic and $k_p(\alpha_1,\alpha_2)$ is
      a cyclic extension over $k_p$. 

  (2) If $\ell=\ell_1$, then $F_{1,w_1}=\Q_{\ell_1}(\zeta,      
      \alpha_1)$ is totally ramified of degree $p^n$ over $k_v=\Q_{\ell_1}(\zeta)$ and $F_{2,w_2}=\Q_{\ell_1}(\zeta, \alpha_2)$ is unramified of degree $p^{m_{12}}$ over $k_v$ by
      Lemma~\ref{lm:G1v}. Since $F_{1,w_1} F_{2,w_2}=F_w$ and
      $F_{1,w_1}\cap F_{2,w_2}=k_v$, we have $G_v\simeq G_{1,v}\times
      G_{2,v}\simeq \Z/p^n\Z\times \Z/p^{m_{12}} \Z$. Similarly, we
      have the same result  
      for $\ell=\ell_2$. \qed        
\end{proof}

Let $W=\<\ell_1,\ell_2\>$ be the subgroup of $k^\times/(k^\times)^{p^n}$ generated by $\ell_1$ and $\ell_2$. With these generators, we shall write $W= \Z/p^n\Z\times \Z/p^n\Z$. 
Each subfield $K_i=k(\alpha_1^{a_i}\alpha_2^{b_i})\subset F$ then corresponds to the cyclic subgroup of $W$ generated by the element $(a_i,b_i)$. 
Recall that
$[K_i:k]=p^{\epsilon_i}$ and we assume
$\epsilon_0=\min\{\epsilon_i \mid 0 \leq i \leq m \}$. 
We can write $(a_i,b_i)=p^{n-\epsilon_i}(a'_i,b'_i)$ such that $p$ does not divide both $a'_i$ and $b'_i$.
For any subset $c\subset \calI=\{1,\dots, m\}$ and any positive integer
$d\le \min\{\epsilon_i \mid i \in c \}$, the composition field $M_c(d)$ corresponds to the subgroup 
$W_c(d)=p^{n-d}\<(a_i', b_i'): i\in c\>$.

We have identified the Galois group $G=\Gal(F/k)$ with $\Hom(W, \mu)$, where $\mu$ denotes the cyclic group $\langle \zeta \rangle$. For the basis $(1,0), (0,1)$ of $W$, we have a dual basis $\tau_1, \tau_2$ for $\Hom(W, \mu)$:
\[ 
\tau_1((1,0))=\tau_2((0,1))=\zeta, \quad \tau_1((0,1))=\tau_2((1,0))=1. 
\] 
We set $H:=\Gal(M_c(d)/k)$ and write $\pi:G\to H$ for the natural
projection, which can be represented as the restriction map
\[ 
\pi: \Hom(W, \mu) \longrightarrow \Hom(W_c(d),\mu). 
\]
The condition that $M_c(d)/k$ is locally cyclic is
equivalent to that for any finite place $v$ of $k$, the decomposition
group $H_v$ at 
$v$ is cyclic. If $G_v$ is the decomposition group at $v$, then
$H_v=\pi(G_v)$. This provides a method to check whether $M_c(d)$ is locally cyclic.

\begin{lemma}\label{lm:Mcd}
Let $c \subset \calI$ be a subset and $d$ be a positive integer with $d \le \epsilon_0$. Write
\begin{align*}
    W_c(d) = p^{n-d} \langle (c_1(c),d_1(c)),(0,d_2(c)) \rangle
\end{align*}
as a subgroup of $\Z/p^n \Z \times \Z/p^n\Z$ for some $c_1(c),d_1(c), d_2(c)\in \Z/p^n\Z$ using row reduction. Let $m_{12}$ and $m_{21}$ be the integers as in Lemma~\ref{lm:Gv}. Then $M_c(d)$ is locally cyclic if and only if
\begin{align*}
    p^{n-d} c_1(c)\in p^{m_{21}}\Z \quad\text{and}\quad p^{n-d} d_2(c)\in p^{m_{12}}\Z.
\end{align*}  
\end{lemma}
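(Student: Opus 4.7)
The plan is to reduce the local cyclicity of $M_c(d)$ to a prime-by-prime check via Lemma~\ref{lm:Gv}, then translate each check into a condition on the row-reduced generators of $W_c(d)$ through Kummer duality. At every finite place $v$ of $k$ not lying above $\ell_1$ or $\ell_2$, the decomposition group $G_v$ is already cyclic: Lemma~\ref{lm:Gv}(1) handles $v\mid p$, while for primes $\ell\notin\{p,\ell_1,\ell_2\}$ the bicyclic extension $F/k$ is unramified at $\ell$, so $G_v$ is generated by Frobenius. The image $H_v=\pi(G_v)$ in $\Gal(M_c(d)/k)$ is then cyclic at each such $v$, so only the two places $v_1\mid\ell_1$ and $v_2\mid\ell_2$ remain. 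By the Kummer-theoretic dictionary set up before the lemma, cyclicity of $H_{v_i}$ is equivalent to cyclicity of the local image $\pi_{v_i}(W_c(d))\subset W_{v_i}$.

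I now analyze $v_1$. Lemma~\ref{lm:Gv}(2) gives $W_{v_1}\simeq\Z/p^n\Z\times\Z/p^{m_{12}}\Z$, where the first (ramified) factor is generated by the uniformizer $\ell_1$ and the second (unramified) factor is generated by $\ell_2$, of local order $p^{m_{12}}$. Under $W\to W_{v_1}$ the two row-reduced generators $p^{n-d}(c_1,d_1)$ and $p^{n-d}(0,d_2)$ project to
\[
\overline{w_1}=(p^{n-d}c_1,\ p^{n-d}d_1\bmod p^{m_{12}}), \qquad \overline{w_2}=(0,\ p^{n-d}d_2\bmod p^{m_{12}}).
\]
If $p^{n-d}d_2\in p^{m_{12}}\Z$, then $\overline{w_2}=0$ and the image is cyclic, generated by $\overline{w_1}$. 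Conversely, when this fails $\overline{w_2}$ is a nonzero element of the unramified subgroup $\{0\}\times\Z/p^{m_{12}}\Z$, and the Hermite-type pivot structure of the row reduction forces $\overline{w_1}$, whose first coordinate $p^{n-d}c_1$ is the untouched first-coordinate pivot, to be linearly independent of $\overline{w_2}$ modulo $p$; the subgroup is therefore of rank two and not cyclic.

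A symmetric analysis at $v_2$, swapping the roles of $\ell_1$ and $\ell_2$, yields that $\pi_{v_2}(W_c(d))$ is cyclic if and only if $p^{n-d}c_1\in p^{m_{21}}\Z$. Combining the two local conditions gives the stated equivalence. The main obstacle is the necessity direction in each local analysis: one must verify that when the relevant vanishing condition fails, the two local generators span a rank-two subgroup. This amounts to checking that the row reduction of $W_c(d)$ places $c_1$ and $d_2$ into genuinely independent coordinate directions upon local projection, a claim that reduces to a $2\times 2$ Smith-normal-form computation over $\Z/p^n\Z$ for the local projection matrix.
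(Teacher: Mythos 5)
Your overall strategy is the same as the paper's: reduce to the places above $\ell_1$ and $\ell_2$ via Lemma~\ref{lm:Gv}, pass through Kummer duality, and test cyclicity of the image of $W_c(d)$ in the explicit local groups $\Z/p^n\Z\times\Z/p^{m_{12}}\Z$ and $\Z/p^{m_{21}}\Z\times\Z/p^n\Z$. The sufficiency direction at each place is fine and matches the paper.

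The gap is in the necessity direction, precisely where you flag ``the main obstacle.'' You assert that if $p^{n-d}d_2\notin p^{m_{12}}\Z$ then $\overline{w}_1$ and $\overline{w}_2$ are linearly independent modulo $p$, hence generate a rank-two group. Neither half of this is automatic. Independence mod $p$ requires the images of \emph{both} generators in $(\Z/p\Z)^2$ to be nonzero and non-proportional, but $\overline{w}_2=(0,\,p^{n-d}d_2\bmod p^{m_{12}})$ can be nonzero yet divisible by $p$, and $p^{n-d}c_1$ can be divisible by $p$ or even zero --- row reduction does not normalize $c_1$ to a unit. And without independence mod $p$ the conclusion genuinely fails: in $\Z/p^2\Z\times\Z/p^2\Z$ the elements $(p,1)$ and $(0,p)$ generate the \emph{cyclic} group $\langle(p,1)\rangle$, even though the first coordinate of the first generator and the second coordinate of the second generator are both nonzero. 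The correct test is whether the image contains two independent elements of order $p$ (equivalently whether its mod-$p$ quotient has order $>p$), and running it requires comparing the orders of the two coordinates of $\overline{w}_1$, not merely noting which coordinates vanish. Relatedly, you never treat the case $p^{n-d}c_1=0$, in which the image at $v_1$ lies entirely in the cyclic unramified factor and is cyclic no matter what $d_2$ is; the paper's proof records exactly this as an extra alternative in its local criterion (``$p^{n-d}c_1(c)=0$ or $p^{n-d}d_2(c)\equiv 0\pmod{p^{m_{12}}}$'') before combining the two places. So the $2\times 2$ computation you defer to is the actual content of the lemma rather than a routine verification, and the independence claim as you state it would yield a false criterion in degenerate cases.
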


\begin{proof}
Let $\ell$ be a prime integer and $v$ a place of $k$ over $\ell$. If
$\ell\nmid p\ell_1\ell_2$, then $v$ is unramified in $F$ and $G_v$ is
cyclic. If $v=p$, then $G_v$ is cyclic by Lemma~\ref{lm:Gv}. Thus, it
suffices to check the cyclicity of $H_v$ at $\ell=\ell_1$ or $\ell=\ell_2$.   
Let $W_v\subset W$ be the subgroup such that $G_v=\Hom(W/W_v, \mu)$. 
Then $H_v$ is cyclic if and only if the quotient $W_c(d)/W_v$ is cyclic. 

If $\ell=\ell_1$, then $G_v=\Z/p^n\Z\times \Z/p^{m_{12}} \Z$ and
$W_v=\{0\}\times p^{m_{12}}\Z/p^n\Z$ by Lemma~\ref{lm:Gv}. Thus, the quotient group $W_c(d)/W_v$ is cyclic if and only if
\begin{align*}
    p^{n-d} c_1(c)=0 \quad\text{ or }\quad p^{n-d} d_2(c)\equiv 0 \pmod{ p^{m_{12}}}.
\end{align*}
If $\ell=\ell_2$, then $G_v=\Z/p^{m_{21}} \Z\times \Z/p^{n} \Z$ and
$W_v=p^{m_{21}}\Z/p^n\Z \times \{0\}$ by Lemma~\ref{lm:Gv}. Thus, the quotient group  $W_c(d)/W_v$ is cyclic if and only if 
\begin{align*}
    p^{n-d} c_1(c)\equiv 0 \pmod{p^{m_{21}}} \quad\text{ or }\quad p^{n-d} d_2(c)= 0.
\end{align*}
To sum up, $M_c(d)$ is locally cyclic if and only if $p^{n-d} c_1(c)\in p^{m_{21}}\Z$ and $p^{n-d} d_2(c)\in p^{m_{12}}\Z$. \qed  
\end{proof}

\begin{cor}\label{cor:F_loc_cyc}
Let $F=k(\alpha_1,\alpha_2)$ be the bicylic field extension as above. Then $F$ is locally cyclic if and only if 
\[ n \le \min \left\{ v_p(\ell_1-1)-v_p(\ord[\ell_2]_{\ell_1}), v_p(\ell_2-1)-v_p(\ord[\ell_1]_{\ell_2}) \right\} . \]
\end{cor}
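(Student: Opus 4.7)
The plan is to specialize Lemma~\ref{lm:Mcd} to the field $F$ itself. Under Kummer duality $F = k(\alpha_1,\alpha_2)$ corresponds to the full group $W = \langle(1,0),(0,1)\rangle \simeq \Z/p^n\Z \oplus \Z/p^n\Z$. Writing $F = M_c(n)$ for a subset $c \subset \mathcal{I}$ whose elements $(a_i,b_i)$ generate $W$ (e.g.\ containing $(1,0)$ and $(0,1)$ up to units), the row-reduction appearing in Lemma~\ref{lm:Mcd} yields $c_1(c) = 1$, $d_1(c) = 0$, $d_2(c) = 1$, and $d = n$ so $p^{n-d} = 1$. Lemma~\ref{lm:Mcd} then gives that $F$ is locally cyclic if and only if $1 \in p^{m_{21}}\Z/p^n\Z$ and $1 \in p^{m_{12}}\Z/p^n\Z$; since $n\ge 1$, each of these holds only when the respective exponent is zero, so the criterion collapses to $m_{12} = m_{21} = 0$.

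Next, I would substitute the closed formulas for $m_{12}$ and $m_{21}$ coming from Lemma~\ref{lm:Gv}(2). For $m_{12}$, with $s_1 = v_p(\ell_1-1)$ and $s_2 = v_p(\ord([\ell_2]_{\ell_1}))$, we have
\[ m_{12} = \max\{\min\{n,s_1\} - (s_1 - s_2),\, 0\}, \]
so $m_{12} = 0$ is equivalent to $\min\{n,s_1\} \le s_1 - s_2$. Using the elementary divisibility $\ord([\ell_2]_{\ell_1}) \mid \ell_1-1$, one has $s_2 \le s_1$, so $s_1 - s_2 \ge 0$; in the regime $s_1 \ge n$ the inequality rearranges to $n \le s_1 - s_2$, which is exactly the first term in the stated minimum. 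The identical argument with the roles of $\ell_1$ and $\ell_2$ interchanged produces the condition $n \le s_1' - s_2'$ for $m_{21}=0$, and the conjunction is the claimed inequality.

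The main obstacle is handling the boundary case $s_1 < n$ (and symmetrically $s_1' < n$) cleanly: there $m_{12} = 0$ forces $s_2 = 0$, while the unified inequality $n \le s_1 - s_2$ need not follow. One has to argue (either by a direct case analysis or by showing the two conditions coincide under the natural convention used throughout the paper) that this degenerate regime is controlled by the same compact inequality. I would expect to resolve this by separating the proof into the two cases $s_1\ge n$ and $s_1 < n$, verifying each direction of the equivalence explicitly, and noting that $s_2 \le s_1$ makes the relevant rearrangements go through.
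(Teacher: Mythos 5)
Your overall route is the one the paper intends (the corollary carries no separate proof there: it is meant as the specialization of Lemmas~\ref{lm:G1v}--\ref{lm:Mcd} to $F$ itself), and the first two steps of your argument are sound: $F$ is locally cyclic iff the decomposition groups at the places over $\ell_1$ and $\ell_2$ are cyclic (places away from $\ell_1\ell_2$ are unramified, and the place over $p$ is handled by Lemma~\ref{lm:Gv}(1)), which by Lemma~\ref{lm:Gv}(2) amounts to $m_{12}=m_{21}=0$, and $m_{12}=0$ is indeed equivalent to $\min\{n,s_1\}\le s_1-s_2$. (A cosmetic point: a subset $c\subset\calI$ with $M_c(n)=F$ need not exist among the given $K_i$, so it is cleaner to quote Lemma~\ref{lm:Gv} directly than Lemma~\ref{lm:Mcd}.)

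The boundary case you flag, however, is a genuine gap and cannot be closed by the case analysis you propose, because the two conditions really do diverge there. If $s_1<n$, then $m_{12}=0$ is equivalent to $s_2=0$, whereas $n\le s_1-s_2$ is automatically false; and this regime does occur simultaneously with $m_{21}=0$. Concretely, take $p=3$, $n=2$, $k=\Q(\zeta_9)$, $\ell_1=7$, $\ell_2=883$. Then $883\equiv 1 \pmod 9$ and $883\equiv 1\pmod 7$, so $s_1=v_3(6)=1$, $s_2=v_3(\ord[883]_{7})=0$ and $m_{12}=0$; moreover $7^{98}\equiv 1\pmod{883}$ with $98$ prime to $3$ and $v_3(882)=2$, so $s_2'=0$ and $m_{21}=0$. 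Hence $F=\Q(\zeta_9,7^{1/9},883^{1/9})$ is locally cyclic (the decomposition groups over $7$ and $883$ are $\Z/9\Z$, the one over $3$ is cyclic, all others are unramified), yet $\min\{1-0,\,2-0\}=1<2=n$, so the ``only if'' direction of the displayed equivalence fails. What your computation actually establishes is the criterion $v_p(\ord[\ell_2]_{\ell_1})\le \max\{v_p(\ell_1-1)-n,\,0\}$ and $v_p(\ord[\ell_1]_{\ell_2})\le \max\{v_p(\ell_2-1)-n,\,0\}$, i.e.\ the stated inequality with $n$ replaced by $\min\{n,v_p(\ell_i-1)\}$ in each slot; the corollary as printed is correct only under the additional hypothesis $\ell_1\equiv\ell_2\equiv 1\pmod{p^n}$ (when $s_1,s_1'\ge n$). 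So instead of trying to show the degenerate regime ``is controlled by the same compact inequality,'' you should record the corrected criterion (or add that hypothesis); no rearrangement using $s_2\le s_1$ will reconcile the two statements.
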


\section{Computing Tate-Shafarevich groups and examples}
\label{sec:C}

In view of Sections 5, we have made some assumptions on $k$ and $K_i$. Our aim is to compute the Tate-Shafarevich groups $\Sha^2_{\omega}(k, \widehat{T}_{L/k})$ and $\Sha^2(k, \widehat{T}_{L/k})$ using Theorem \ref{thm:Lee}. We implemented several computer programs that computed all the invariants mentioned in the theorem. The programs use the mathematical software SageMath and can be found on
\begin{center}
    \url{https://github.com/hfy880916/Tate-Shafarevich-groups-of-multinorm-one-torus}.
\end{center}

There are some advantages to make the assumptions above. First, each $K_i$ is contained in the bicyclic extension $k(\sqrt[p^n]{\ell_1}, \sqrt[p^n]{\ell_2})$, so we do not have to check whether a field $M_c(d)$ is a subfield when we compute the algebraic degree of freedom of an equivalence class $c$. Furthermore, the conditions ``$M_c(d)$ is locally cyclic'' and ``$K_0(f)$ is contained in $M_c(d)$'' that appear in the definitions can be converted into problems in finite abelian groups. With these advantages, we can calculate (algebraic) patching degrees and (algebraic) degrees of freedom of examples in reasonable time. Below we illustrate the results by showing two examples

\begin{Example} \label{ex:1}
We put $p = 3$ and $n = 3$, so $k = \Q(\zeta_{27})$. Let the primes $\ell_1 = 5$ and $\ell_2 = 19$. We consider the tori consisting of the following extensions over $k$: $K_0 = k(\sqrt[27]{5})$, $K_1 = k(\sqrt[27]{5 \times 19})$, $K_2 = k(\sqrt[27]{5^2 \times 19^3})$, $K_3 = k(\sqrt[27]{5^3 \times 19^5})$, $K_4 = k(\sqrt[27]{5^5 \times 19^{11}})$. We list $a_i$ and $b_i$ as follows:
\begin{align*}
    a_0 &= 1, & a_1 &= 1, & a_2 &= 2, & a_3 &= 3, & a_4 &= 5, \\
    b_0 &= 0, & b_1 &= 1, & b_2 &= 3, & b_3 &= 5, & b_4 &= 11.
\end{align*}
We see that the $K_i$'s are linearly disjoint. Now we list the $e_{ij}$'s,
\begin{align*}
    [e_{ij}] = \begin{pmatrix}
    3 & 0 & 0 & 0 & 0 \\
    0 & 3 & 0 & 0 & 0 \\
    0 & 0 & 3 & 0 & 0 \\
    0 & 0 & 0 & 3 & 0 \\
    0 & 0 & 0 & 0 & 3
    \end{pmatrix}.
\end{align*}
In this case the only nonempty $U_r$ is $U_0 = \{1, 2, 3, 4\} = \mathcal{I}$, and it has four $1$-equivalence classes $\{1\}$, $\{2\}$, $\{3\}$, $\{4\}$. We compute that $L(U_0) = 0$, the algebraic patching degree $\Delta^{\omega}_0 = 3$, and the patching degree $\Delta_0 = 3$. We compute and list the algebraic degrees of freedom $f^{\omega}_c$ and degrees of freedom $f_c$ for equivalence classes $c = U_0, \{1\}, \{2\}, \{3\}, \{4\}$.
\begin{table}[H]
    \centering
    \begin{tabular}{c|c|c|c|c|c}
        \hline
        \hline
        $c$ & $U_0$ & $\{1\}$ & $\{2\}$ & $\{3\}$ & $\{4\}$ \\
        \hline
        $f^{\omega}_c$ & $3$ & $0$ & $0$ & $0$ & $0$ \\
        $f_c$ & $1$ & NE & NE & NE & NE \\
        \hline
        \hline
    \end{tabular}
    \caption{The algebraic degrees of freedom and degrees of freedom in Example \ref{ex:1}.}
\end{table}
In the table above, ``NE'' stands for ``does not exist''. Using Theorem \ref{thm:Lee} we compute the Tate-Shafarevich groups,
\begin{align*}
    \Sha^2_{\omega}(k, \widehat{T}_{L/k}) &\simeq (\Z/p^{(3-0)}\Z)^{(4-1)} = \Z/27\Z \times \Z/27\Z \times \Z/27\Z; \\
    \Sha^2(k, \widehat{T}_{L/k}) &\simeq (\Z/p^{(1-0)}\Z)^{(4-1)} = \Z/3\Z \times \Z/3\Z \times \Z/3\Z.
\end{align*}
\end{Example}

\begin{Example} \label{ex:2}
Let $p, n, k, \ell_1, \ell_2, m$ be the same. Consider a different tori consisting of $K_0 = k(\sqrt[27]{5})$, $K_1 = k(\sqrt[27]{5 \times 19})$, $K_2 = k(\sqrt[27]{5^2 \times 19^3})$, $K_3 = k(\sqrt[27]{5^4 \times 19^9})$, $K_4 = k(\sqrt[27]{5^{10}\times 19^{19}})$. We list $a_i$ and $b_i$ as follows:
\begin{align*}
    a_0 &= 1, & a_1 &= 1, & a_2 &= 2, & a_3 &= 4, & a_4 &= 10, \\
    b_0 &= 0, & b_1 &= 1, & b_2 &= 3, & b_3 &= 9, & b_4 &= 19.
\end{align*}
The $K_i$'s are no longer linearly disjoint so we expect the components of the Tate-Shafarevich groups to be less regular. We list the $e_{ij}$'s,
\begin{align*}
    [e_{ij}] = \begin{pmatrix}
    3 & 0 & 0 & 1 & 0 \\
    0 & 3 & 0 & 0 & 2 \\
    0 & 0 & 3 & 0 & 0 \\
    1 & 0 & 0 & 3 & 0 \\
    0 & 2 & 0 & 0 & 3
    \end{pmatrix}.
\end{align*}
In this case we have two nonempty $U_r$'s, $U_0 = \{1, 2, 4\}$ and $U_1 = \{3\}$. We present the $\ell$-equivalence relations that need to be considered as follows.
\begin{figure}[H]
\centering
    \begin{subfigure}[t]{0.32\textwidth}
        \centering
        \includegraphics[width=3.5cm]{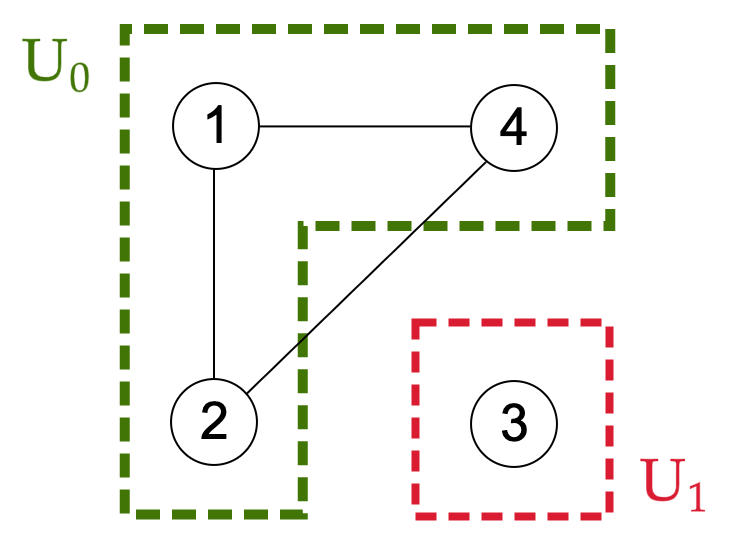}
        \caption{$\ell = 0$}
    \end{subfigure}
    \begin{subfigure}[t]{0.32\textwidth}
        \centering
        \includegraphics[width=3.5cm]{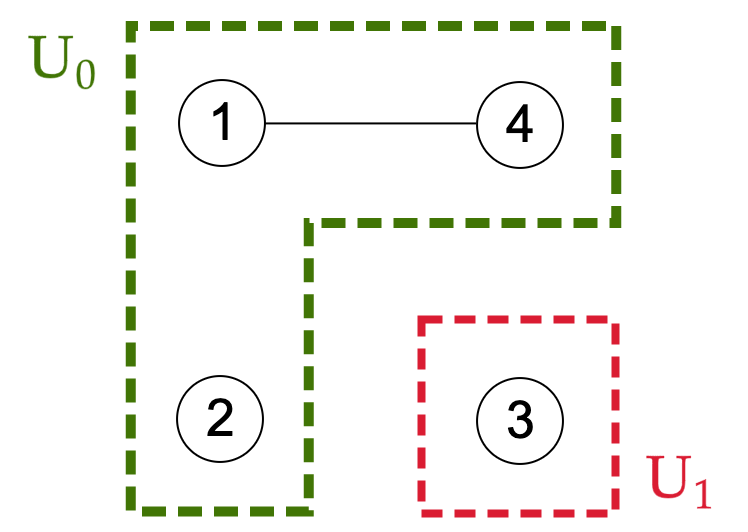}
        \caption{$\ell = 1, 2$}
    \end{subfigure}
    \begin{subfigure}[t]{0.32\textwidth}
        \centering
        \includegraphics[width=3.5cm]{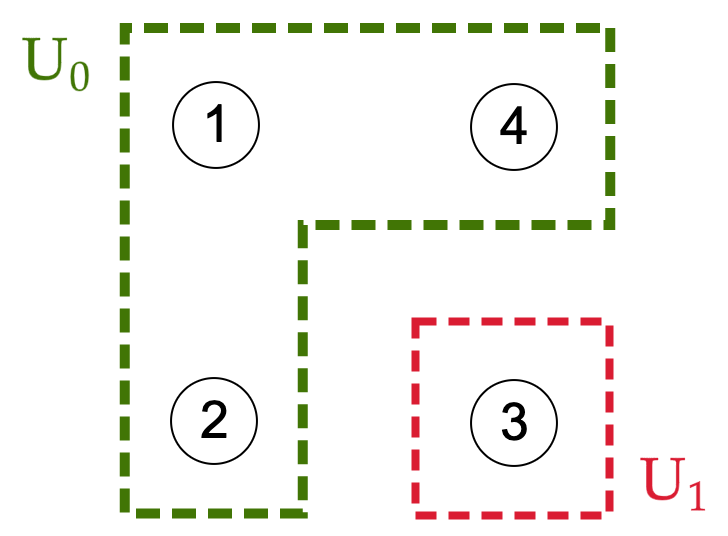}
        \caption{$\ell = 3$}
    \end{subfigure}
\caption{For $i, j \in U_r$, they are connected by a line iff. $i \sim_{\ell} j$.}
\end{figure}
The set $\mathcal{R} = \{0, 1\}$, and we compute that $L(U_0) = 0$, $L(U_1) = 3$. We compute the algebraic patching degrees $\Delta^{\omega}_r$ and patching degrees $\Delta_r$,
\begin{align*}
    \Delta^{\omega}_0 &= 3, & \Delta^{\omega}_1 &= 3, & \Delta_0 &= 1, & \Delta_1 &= 1.
\end{align*}
We compute and list the algebraic degrees of freedom $f^{\omega}_c$ and degrees of freedom $f_c$ for equivalence classes $c = U_0$, $\{1, 4\}$, $\{1\}$, $\{4\}$, $\{2\}$, and $U_1$.
\begin{table}[H]
    \centering
    \begin{tabular}{c|c|c|c|c|c|c}
        \hline
        \hline
        $c$ & $U_0$ & $\{1, 4\}$ & $\{1\}$ & $\{4\}$ & $\{2\}$ & $U_1$ \\
        \hline
        \hline
        $f^{\omega}_c$ & $3$ & $0$ & $0$ & $0$ & $0$ & $1$ \\
        \hline
        $f_c$ & $1$ & NE & NE & NE & NE & NE \\
        \hline
        \hline
    \end{tabular}
    \caption{The algebraic degrees of freedom and degrees of freedom in Example \ref{ex:2}.}
\end{table}
Hence the Tate-Shafarevich groups are
\begin{align*}
    \Sha^2_{\omega}(k, \widehat{T}_{L/k}) &\simeq \Z/p^{3-1}\Z \oplus (\Z/p^{3-0}\Z)^{2-1} =  \Z/9\Z \times \Z/27\Z; \\
    \Sha^2(k, \widehat{T}_{L/k}) &\simeq (\Z/p^{1-0}\Z)^{2-1} = \Z/3\Z.
\end{align*}
\end{Example}

\section*{Acknowledgments}
The authors are grateful to Ting-Yu Lee for her helpful discussions
and generously sharing the ideas of her
work~\cite{lee2022tate}. Thanks also go to Dasheng Wei for his
expertise and helpful discussions.   
The present paper grows up through the authors' participating an
undergraduate research program (URP) held by the National Center for
Theoretical Sciences. They acknowledge the NCTS for the stimulating
environment.  
Liang and Yu are partially supported by the NSTC grant 
109-2115-M-001-002-MY3.



\begin{thebibliography}{10}

\bibitem{BLP19}
E.~Bayer-Fluckiger, T.-Y. Lee, and R.~Parimala.
\newblock Hasse principles for multinorm equations.
\newblock {\em Adv. Math.}, 356:106818, 35, 2019.

\bibitem{demarche-wei}
Cyril Demarche and Dasheng Wei.
\newblock Hasse principle and weak approximation for multinorm equations.
\newblock {\em Israel J. Math.}, 202(1):275--293, 2014.

\bibitem{hurlimann}
W.~H\"{u}rlimann.
\newblock On algebraic tori of norm type.
\newblock {\em Comment. Math. Helv.}, 59(4):539--549, 1984.

\bibitem{lee2022tate}
T-Y Lee.
\newblock The {T}ate-{S}hafarevich groups of multinorm-one tori.
\newblock {\em Journal of Pure and Applied Algebra}, 226(7):106906, 2022.

\bibitem{NSW}
J\"{u}rgen Neukirch, Alexander Schmidt, and Kay Wingberg.
\newblock {\em Cohomology of number fields}, volume 323 of {\em Grundlehren der
  Mathematischen Wissenschaften [Fundamental Principles of Mathematical
  Sciences]}.
\newblock Springer-Verlag, Berlin, 2000.

\bibitem{ono:tamagawa}
Takashi Ono.
\newblock On the {T}amagawa number of algebraic tori.
\newblock {\em Ann. of Math. (2)}, 78:47--73, 1963.

\bibitem{platonov-rapinchuk}
Vladimir Platonov and Andrei Rapinchuk.
\newblock {\em Algebraic groups and number theory}, volume 139 of {\em Pure and
  Applied Mathematics}.
\newblock Academic Press, Inc., Boston, MA, 1994.
\newblock Translated from the 1991 Russian original by Rachel Rowen.

\bibitem{pollio}
Timothy~P. Pollio.
\newblock On the multinorm principle for finite abelian extensions.
\newblock {\em Pure Appl. Math. Q.}, 10(3):547--566, 2014.

\bibitem{pollio-rapinchuk}
Timothy~P. Pollio and Andrei~S. Rapinchuk.
\newblock The multinorm principle for linearly disjoint {G}alois extensions.
\newblock {\em J. Number Theory}, 133(2):802--821, 2013.

\bibitem{prasad-rapinchuk}
Gopal Prasad and Andrei~S. Rapinchuk.
\newblock Local-global principles for embedding of fields with involution into
  simple algebras with involution.
\newblock {\em Comment. Math. Helv.}, 85(3):583--645, 2010.

\end{thebibliography}

\def\cprime{$'$}

\end{document}